\documentclass[11pt]{amsart}
\usepackage{amsmath}
\usepackage{enumerate}
\usepackage{float}
\usepackage{comment}
\usepackage{fancyvrb}
\usepackage{caption}
\usepackage{tikz-cd}
\usepackage{subcaption}
\usepackage[margin=1.3 in, includeheadfoot=true]{geometry}
\usepackage{mathtools}
\usepackage{scalerel}
\usepackage{amssymb}
\usepackage{hyperref}
\hypersetup{
    colorlinks=true,
    linkcolor=blue,
    filecolor=magenta,      
    urlcolor=blue,
    citecolor=blue,
}
\urlstyle{same}
\usepackage{color}
\usepackage{graphicx}
\usepackage{mathrsfs}

\usepackage{mathabx}
\usepackage{amsthm}
\usepackage{color}

\usepackage[font=small]{caption}
\usetikzlibrary{cd}
\usetikzlibrary{decorations.markings}

\usepackage{url}

\newtheorem*{remark}{Remark}

\def\S{\mathfrak{S}}

\def\Z{\mathbb{Z}}

\def\rothe{\operatorname{Rothe}}

\def\SSYT{\operatorname{SSYT}}
\def\SYT{\operatorname{SYT}}

\def\word{\Psi}
\def\red{\operatorname{red}}

\def\BPD{\operatorname{BPD}}
\def\BPDx{\operatorname{BPD}^\mathsf{x}}
\def\pop{\operatorname{pop}}

\def\id{\operatorname{id}}
\def\EG{\operatorname{EG}}

\def\Des{\operatorname{Des}}
\def\rect{\operatorname{rect}}

\def\r{\includegraphics[scale=0.6]{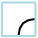}}

\newcommand{\jt}{%
  \begingroup\normalfont
  \includegraphics[height=\fontcharht\font`\B]{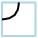}%
  \endgroup
}

\newcommand{\rt}{%
  \begingroup\normalfont
  \includegraphics[height=\fontcharht\font`\B]{rtile.eps}%
  \endgroup
}

\newcommand{\bt}{%
  \begingroup\normalfont
  \includegraphics[height=\fontcharht\font`\B]{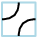}%
  \endgroup
}

\theoremstyle{definition}
\newtheorem{theorem}{Theorem}[section]
\newtheorem{proposition}[theorem]{Proposition}

\newtheorem{lemma}[theorem]{Lemma}
\newtheorem{definition}[theorem]{Definition}
\newtheorem{corollary}[theorem]{Corollary}
\newtheorem{example}[theorem]{Example}

\begin{document}
\title{Schubert Products for Permutations with 
Separated Descents}


\author{Daoji Huang}

\address{%
School of Mathematics, University of Minnesota, Minneapolis MN, 55455, USA
}

\begin{abstract}
    We say that two permutations $\pi$ and $\rho$ have \emph{separated descents} at position $k$ if $\pi$ has no descents before position $k$ and $\rho$ has no descents after position $k$. We give a counting formula, in terms of reduced word tableaux, for computing the  structure constants of products of Schubert polynomials indexed by permutations with separated descents, and recognize that these structure constants are certain Edelman-Greene coefficients. Our approach uses generalizations of Sch\"utzenberger's jeu de taquin algorithm and the Edelman-Greene correspondence via bumpless pipe dreams. 
\end{abstract}

\maketitle
\section{Introduction}
Schubert polynomials are representatives
of Schubert classes of the
full flag variety. Products of
Schubert polynomials expand into a 
positive $\Z$-linear combination of
Schubert polynomials, and the coefficients
in this expansion are known as the
\emph{Schubert structure constants}. It
is a famous open problem in algebraic
combinatorics to model the structure
constants nonnegatively using 
combinatorial objects that bypass geometry
in full generality;
or in other words,
to find a \#P-algorithm
that computes the
structure constants.
For a more extensive
introduction of this problem, see
e.g. \cite{lenart2010growth}.

The goal of this paper is to make
some progress on this problem. 
We say that a pair of permutations
$\pi$ and $\rho$ have \textbf{separated
descents} at position $k$ if $\pi$ has no descents
before
position $k$ and $\rho$ has no descents after
position $k$.
This notion was first brought forth
by Knutson and Zinn-Justin \cite{allentalk}, where
they derived a puzzle rule for this 
problem
(that generalizes to equivariant $K$-theory) from studying
quiver varieties. 
This perspective of studying Schubert
and related problems is surveyed in
\cite{KnutsonICM}.
The rule
presented in our work is a tableaux rule
and uses only elementary combinatorial
methods. In
particular, our result generalizes
 results of Kogan \cite{kogan2000schubert},
 Knutson--Yong \cite{knutson2004formula},
 and
Lenart \cite{lenart2010growth}
where different rules were given for
solving the separated descents problem
when one of the permutations
is Grassmannian (i.e., has only a
single descent). One advantage of our approach
is that it naturally recognizes the
separated-descent structure constants
as Edelman-Greene coefficients,
which are the coefficients of the expansion
of Stanley symmetric functions in the
Schur basis.
Our rule is also
natural for computing products of
multiple Schubert polynomials 
indexed by permutations with separated
descents. This gives a generalization
of Purbhoo--Sottile \cite{purbhoo2009littlewood}, where
products of multiple Grassmannian permutations were considered. We note that
even under the condition that every
permutation in the indexing set
is Grassmannian, our rule is still more
general, since it is able to compute the
coefficients of all terms in
the product, whereas 
in \cite{purbhoo2009littlewood} only those 
coefficients of  Grassmannian permutations were considered. 

Our main theorem (Theorem~\ref{thm:main}) is as follows:
\vskip 0.5em

\noindent\textbf{Theorem.}
Suppose $\pi, \rho\in S_n$ such that $\pi$ has
no descents before position $k$ and $\rho$ has
no descents after position $k$. Define
\[\pi\star \rho(i) =\begin{cases}
\pi(i+k)-k & \text{ if }i\in [1-k,0]\\
\rho(i)+n-k & \text{ if }i\in[1,k]\\
\pi(i)-k &\text{ if }i\in[k+1, n]\\
\rho(i-(n-k)) + n-k &
\text{ if }i\in[n+1,2n-k]
\end{cases}\]
where $\pi\star\rho\in S_{[1-k,2n-k]}$.
Let $\sigma\in S_{2n-k}$ such that 
$\ell(\pi\star\rho)-\ell(\sigma)=
\ell((\pi\star\rho)\sigma^{-1})=k(n-k)$.
Let $\lambda_{k\times(n-k)}$ be the partition of
the $k\times(n-k)$ rectangular shape.
 The Schubert structure constant
$c_{\pi,\rho}^\sigma$ is equal to 
the Edelman-Greene coefficient
$j^{(\pi\star\rho)\sigma^{-1}}_{\lambda_{k\times(n-k)}}$, which is
the number of
reduced word tableaux $T$ of shape $\lambda_{k\times(n-k)}$ such
that the permutation given by
the reading word of $T$ (see Section~\ref{sec:eg} for convention) is 
$(\pi\star\rho)\sigma^{-1}$. Furthermore,
$c_{\pi,\rho}^\sigma=0$ for all other $\sigma$ (even though the
number of tableaux may be nonzero).

\vskip 0.5em
We show an example that illustrates the
statement of the theorem.
Let $n=6$, $k=3$, $\pi=135264, \rho=513246$. Then
    $\pi\star\rho=[-2,0,2,8,4,6,-1,3,1,5,7,9]$.
    We have the following Schubert product expansion:
    \[
        \mathfrak{S}_\pi\mathfrak{S}_\rho=\mathfrak{S}_{615243}+\mathfrak{S}_{534162}+\mathfrak{S}_{625134}+\mathfrak{S}_{526143}+2\mathfrak{S}_{624153}
        +\mathfrak{S}_{7152346}+\mathfrak{S}_{7142536}+\mathfrak{S}_{7231546}.
    \]
    For $\sigma=624153$, there are two Coxeter-Knuth classes
    of reduced words for $(\pi\star\rho)\sigma^{-1}$ whose
    reduced word tableaux are of shape $3\times 3$.
    These are:
    \begin{center}
        \includegraphics[scale=0.8]{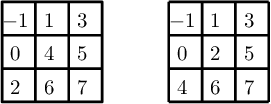}
    \end{center}
        For $\sigma=7142536$, there is one Coxeter-Knuth class
    of reduced words for $(\pi\star\rho)\sigma^{-1}$ whose
    reduced word tableau is of shape $3\times 3$:
    \begin{center}
        \includegraphics[scale=0.8]{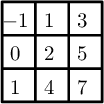}
    \end{center}
    
When $\pi$ and $\rho$ both have a single 
descent at position $k$, our theorem gives a 
combinatorial interpretation for the classical
Littlewood-Richardson coefficients in terms of certain reduced word tableaux of a
rectangular shape, which does not seem to directly match an existing rule in the  literature. However, a version of the theorem
in terms of the generalization of jeu de taquin on semi-standard
Young tableaux can also be stated, and this is given
in Theorem~\ref{thm:jdtversion}.

We take a moment to elaborate on the 
journey that led to the discovery.
The first idea was to
generalize Sch\"utzenberger's 
jeu de taquin algorithm using bumpless
pipe dreams. The key challenge here
was to come up with a bumpless
pipe dream analogue of
skew tableaux. To achieve this,
we introduce a simulation of
jeu de taquin using semi-standard
tableaux, and use the direct bijection
between bumpless pipe dreams for
$k$-Grassmannian permutations and
semi-standard Young tableaux of
the corresponding shape to obtain 
a version of jeu de taquin with
bumpless pipe dreams. The second
challenge was to figure out to what
extent the algorithm can be generalized
beyond Grassmannian permutations. It turns
out that the separated descents condition
fits the bill, and the rectification
process by jeu de taquin
with bumpless pipe dreams for $k$-Grassmannian permutations uses moves
very much like those described
in \cite{LLS}, where a bumpless
pipe dream version of Edelman-Greene
correspondence is given. The full result
is hence achieved by considering
a generalization of the Edelman-Greene
correspondence using bumpless pipe dreams. 
The obstruction to extending our techniques
beyond the separated descent case seems
to be the difficulty of finding a bumpless pipe
dream analogue of skew semi-standard Young
tableaux in direct bijection of the product
of the bumpless pipe dreams for the two input
permutation. The details of this construction
in the separated descent case is introduced
in Section~\ref{subsec:sep1}.

The organization of the paper is as
follows. Section 2 provides background
and sets conventions for the paper.
A combinatorial interpretation of the divided difference
operators is also given in this section.
Section 3 develops the main technical
tools, the rectification and insertion
algorithms on bumpless pipe dreams, and
establishes some necessary properties
of these algorithms.
Section 4 
states and proves our main
result, Theorem \ref{thm:main}, 
as well as the extension
of our result to 
Schubert products for
for multiple
permutations with separated descents.
Section 5 spells out the connection
of our construction to the original
jeu de taquin, and Section 6 ends with
some concluding remarks.

\section{Preliminaries}
We introduce some notations. For $m,n\in\Z$,
$m\le n$ we write $[m,n]:=\{m,m+1,\cdots, n\}$.
Let $S_{[m,n]}$ denote the set of permutations
on the alphabet $[m,n]$. As usual, $S_n$ denotes
the set of permutations on $\{1,\cdots, n\}$.
\subsection{Bumpless pipe dreams and Schubert polynomials}
Bumpless pipe dreams were introduced in
\cite{LLS}. A \textbf{bumpless pipe 
dream} for a permutation $\pi\in S_{[m,n]}$
is a tiling of an $(n-m+1)\times(n-m+1)$ grid
with rows and columns indexed by numbers
in $[m,n]$ by the following six kinds of tiles,
such that $n-m+1$ pipes that travel
from the south border and exit from the east
border are formed. The tile where two pipes
``bump'' is forbidden, and hence the
name ``bumpless pipe dream.''
\begin{center}
    \includegraphics[scale=0.6]{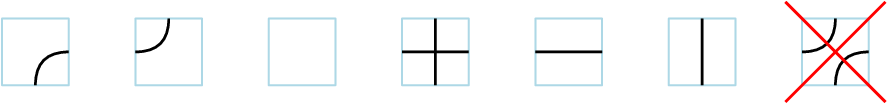}
\end{center}
We also impose the ``reducedness''
condition, that no two pipes are allowed to
cross twice.
The pipes are labelled from $m$ to $n$
on the south border, and reading off the labels
from top to bottom on the east border gives
a permutation in $S_{[m,n]}$. Given a permutation
$\pi\in S_{[m,n]}$, we denote the set of 
bumpless pipe dreams of this permutation
by $\BPD(\pi)$. The tiles in a bumpless pipe
dreams are indexed with
matrix coordinates. The \textbf{Rothe
bumpless pipe dream} $\rothe(\pi)$ is
the unique bumpless pipe dream of 
$\pi$ that does not contain any
\jt-tiles\footnote{These are pronounced ``jay''
as in ``j'', whereas the
\rt-tiles are pronounced ``are''.}, which directly corresponds to the Rothe diagram of the permutation $\pi$. It was shown in
\cite[Section 5.2]{LLS}
any $D\in\BPD(\pi)$ can be obtained
from $\rothe(\pi)$ by performing
a sequence of \textbf{droop moves}, which we briefly recall. A
\textbf{droop} is a local move that swaps an $\rt$-tile of pipe $p$ at position $(a,b)$ with a strictly-southeast blank tile at position $(a+i,b+j)$. Let $R$ be the rectangle with NW corner $(a,b)$ and SE corner $(a+i,b+j)$. After the droop, $p$ travels along row $a+i$ and column $b+j$, $(a,b)$ becomes blank, and $(a+i,b+j)$ becomes a $\jt$-tile. The droop is only allowed if every tile in row $a$ and column $b$ of $R$ contains $p$, $R$ contains only one $\rt$-tile at $(a,b)$, and after the droop we obtain a valid bumpless pipe dream. The reverse operation of a droop is called an 
\textbf{undroop}. We note here that a more general version
of  droops and undroops  will sometimes be used and defined later.

Given $\pi\in S_n$, define the
bumpless pipe dream polynomial
$P_\pi$ by
\[P_\pi(\mathbf{x}):=\sum_{D\in \BPD(\pi)}\operatorname{wt}(D),\]
where $\operatorname{wt}(D):=\prod_{(i,j)\in \operatorname{blank}(D)}x_i$ and $\operatorname{blank}(D)$ is the
set of blank tiles in $D$.

Meanwhile, Schubert polynomials 
were defined by Lascoux and 
Sch\"utzenberger via divded
difference operators $\partial_i$,
defined as
\[\partial_i f:=\frac{f-s_i f}{x_i-x_{i+1}},\]
where $f$ is a polynomial, and
$s_i$ acts by swapping the
variables $x_i$ with $x_{i+1}$.
The \textbf{Schubert polynomial}
$\S_\pi$ for $\pi\in S_n$ is defined
as follows:
\[\S_\pi:=\begin{cases}x_1^{n-1}x_2^{n-2}\cdots x_{n-1} & \text{ if } \pi = n,(n-1),\cdots,2,1\\
\partial_i\S_{\pi s_i} &\text{ if }\pi(i)<\pi(i+1). \end{cases}\]
It was proved in \cite{LLS} and
in \cite{weigandt2021bumpless}
that $P_\pi=\S_\pi$.  We give
another proof by directly computing how divided difference operators act on
bumpless pipe dream polynomials.
The operations defined on
bumpless pipe dreams in the
proof  are analogous to
the ``mitosis'' operations
for pipe dreams. See \cite{knutson2005grobner} and
\cite{MILLER2003223}.
\begin{proposition}
\label{prop:divided}
Let $\pi \in S_n$.
If $\pi(i)<\pi(i+1)$,
then $\partial_i P_\pi=0$.
Otherwise, $\partial_i P_\pi = P_{\pi s_i}.$ 
\end{proposition}

\begin{proof}
Suppose $D\in\BPD(\pi)$. We define a row move within
row $i$ and $i+1$, which is a slight generalization of the  droop move of \cite{LLS} and recalled above, but for simplicity we also call it a droop.  Let $(i,c)$ be the coordinate of an
$\rt$-tile in row $i$, and $p$ the pipe that contains this
$\rt$. If there is a blank tile $(i+1,y)$ in row $i+1$
directly below $p$, the row move can change $p$ so that
$(i,c)$ becomes blank, $(i+1,y)$ becomes $\jt$, and $p$
travels in row $i+1$ instead of $i$ strictly between
columns $c$ and $y$, and any ``kinks'' of other pipes
between columns $c$ and $y$ and blank tiles get shifted up. 

We also define a reverse row move, which for simplicity we also call an undroop. Let $(i+1,y)$ be the coordinate
of a $\jt$-tile in row $i+1$, and let $p$ be the pipe that
contains this $\jt$. If there is a blank tile $(i,c)$ in
row $i$ directly above $p$, the reverse move can change $p$ so
that $(i+1,y)$ becomes blank, $(i,c)$ becomes $\rt$, and
$p$ travels in row $i$ instead of $i+1$ strictly between
columns $c$ and $y$, and any ``kinks'' of other pipes
between columns $c$ and $y$ get shifted down. We also call
this an undroop. In Figure~\ref{fig:rowdroops}, we show
some examples of these row moves and reverse row moves.

\begin{figure}[h]
    \centering
    \includegraphics[scale=0.6]{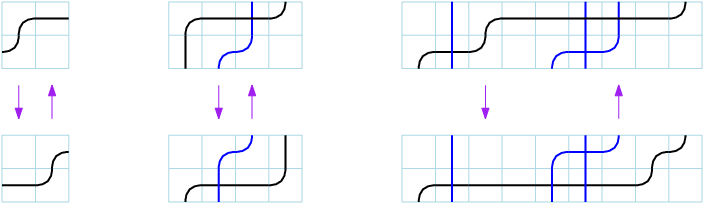}
    \caption{Examples of (generalized) droops and undroops within row $i$ and $i+1$}
    \label{fig:rowdroops}
\end{figure}
We then define an equivalence relation
on $\BPD(\pi)$, where $D_1\sim D_2$
if $D_1$ and $D_2$ are the same
outside of row $i$ and $i+1$, and
are related by a sequence of
droops/undroops as defined above within row
$i$ and $i+1$. 
Let $\mathcal{D}$ be an equivalence
class. 
Consider a bumpless pipe dream $D$ in an equivalence class
$\mathcal{D}$. If $e$ is a blank tile in row $i$ or $i+1$, there are three cases:
\begin{itemize}
    \item[(a)] $e$ is above or below another blank tile,
    \item[(b)] $e$ is above or below a pipe $p$ that exits from row $j$ with $j\le i$ (i.e., $p=\pi(j)$), in which case we say ``$e$ is assigned to $p$'', 
    \item[(c)] $e$ is above the pipe $p=\pi(i+1)$, the pipe that exits from row $i+1$.
\end{itemize}
Let $Q$ denote the set of pipes to which a droop or undroop
is available within row $i$ and $i+1$ of $D$. By definition
of these moves, 
we notice that by restricting to row $i$
and $i+1$, 
if a
blank tile is assigned to $q$, it can only be moved by $q$
and not other pipes. If we fix other pipes and only perform droops/undroops on $q$, the set of blank tiles that can be moved by $q$ generate symmetric
polynomial $x_i^\alpha+ x_i^{\alpha-1}x_{i+1}+\cdots + x_{i+1}^\alpha$, where $\alpha$ is the number of such blank
tiles.
Blank tiles of types (a) and (c) are
not affected any droop or undroops.

Let $m$ be the number of blank tiles of type (c). 
We define 
$P_\mathcal{D}:=\sum_{D\in \mathcal{D}}\prod_{(i,j)\in\text{blank}(D)}x_i$,
and notice that 
$P_\mathcal{D}=p(x_1,\cdots, x_{i-1}, x_{i+2},\cdots, x_{n-1})(f(x_i,x_{i+1})+x_i^m)$,
where $p$ is a monomial in variables
not involving $x_i$ and $x_{i+1}$,
$f$ is symmetric in $x_i$ and $x_{i+1}$. (In fact, $f$ can be written as a product of symmetric polynomials in $x_i$ and $x_{i+1}$ indexed by elements in $Q$, times $(x_ix_{i+1})^b$, where $b$ is the number of vertical stacks of two blank tiles in row $i$ and $i+1$.)
We will show that
\[\partial_i P_\mathcal{D}:=\begin{cases}0 & \text{ if } m=0\\
P_\mathcal{D'} &\text{ otherwise}, \end{cases}\]
where $\mathcal{D}'$ is defined in the argument to follow.

If 
$\pi(i) < \pi(i+1)$,
 the pipes $\pi(i)$
and $\pi(i+1)$ do not cross
in any element of $\BPD(\pi)$.
This implies $m=0$ and $P_{\mathcal{D}}$
is symmetric in $x_i$ and $x_{i+1}$,
so $\partial_i P_\mathcal{D}=0$.
Since $\mathcal{D}$ is
arbitrary, $\partial_i P_\pi=0$.

Otherwise $\pi(i)>\pi(i+1)$. Pick $D\in\mathcal{D}$,
 let $y_i$ be the column index
of the last \rt-tile on row $i$
in $D$,
and $y_{i+1}$ be the column index
of the last  \rt-tile on
row $i+1$ in $D$. By assumption,
$y_{i+1}<y_i$.  Again,
if $m=0$ then $\partial_i P_{\mathcal{D}}=0$.
If $m>0$, we construct a (multi-valued) map $\phi$
that sends a $D\in \BPD(\pi)$
to a
$D'\in \BPD(\pi s_i)$ as follows.
Replace the crossing
of pipes $\pi(i)$ and $\pi(i+1)$
with a \bt-tile,
and undroop the NW-elbow
into one of these $m$
boxes in row $i$,
as 
illustrated in Figure~\ref{fig:row move}.
\begin{figure}[h]
    \centering
    \includegraphics[scale=0.6]{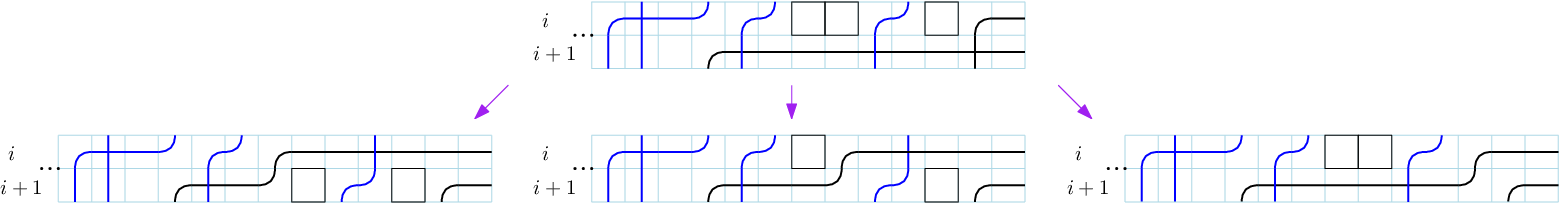}
    \caption{The multi-valued map $\phi$}
    \label{fig:row move}
\end{figure}
We argue the set of diagrams $D'$ that can
be obtained from $D\in\mathcal{D}$, where $\mathcal{D}$ is
an equivalence class in $\BPD(\pi)$ via $\phi$ form an
equivalence class
$\mathcal{D}'$. First notice that if $D_1',D_2'\in\phi(D)$,
then $D_1'\sim D_2'$ by droops/undroops of the pipe
$\pi s_i(i)$. Now suppose $D_1\sim D_2$, then within row
$i$ and $i+1$, $D_1$ and $D_2$ are the same in columns
$\ge y_{i+1}$, and $D_2$ can be obtained from $D_1$ by 
droops/undroops of set of pipes $Q$ with indices
$\pi(j)$ for various $j<i$. If $D_1'\in \phi(D_1)$ and $D_2'
\in \phi(D_2)$, $D_2'$ can be obtained from $D_1'$ by the
same droop/undroop moves of pipes in $Q$, plus possible
droop/undroop moves of the pipe $\pi s_i(i)$. Therefore,
$D_1'\sim D_2'$. For the other direction, define the reverse
operation $\psi$ as follows. For a $D'\in \BPD(\pi s_i)$, 
droop the rightmost $\rt$ in row $i$ into the rightmost
$\rt$ in row $i+1$ so that a $\bt$ is created, and then
replace the $\bt$-tile with a cross tile. Note that
the blank tiles that were assigned to pipe $\pi s_i(i)$ in $D'$
become unmovable in $\psi(D')$, and the other blank
tiles can move the same way as before.
We can then see that if $D_1',D_2'\in \BPD(\pi s_i)$ where
$D_1'\sim D_2'$, then $\psi(D_1')\sim \psi(D_2')$.

Notice that
$\partial_i P_\mathcal{D}=p(x_1,\cdots, x_{i-1},x_{i+2},\cdots, x_{n-1})f(x_i,x_{i+1})(x_i^{m-1}+x_i^{m-2}x_{i+1}+\cdots x_{i+1}^{m-1})= P_{\mathcal{D}'}$. 
We can then conclude that
$\partial_i P_\pi = P_{\pi s_i}$.
\end{proof}

\begin{corollary}[\cite{LLS}]
For every $\pi\in S_n$, $P_\pi=\S_\pi$.
\end{corollary}
\begin{proof}
When $\pi=n,n-1,\cdots, 1$, $\BPD(\pi)$
contains a singleton Rothe
bumpless pipe dream, and 
$P_\pi = x_1^{n-1}x_2^{n-2}\cdots x_{n-1}$.
The inductive case is given by Proposition
\ref{prop:divided}.
\end{proof}

\subsection{Edelman-Greene correspondence}
\label{sec:eg}
We recall some results about the Edelman-Greene correspondence,
and set some conventions for this paper.

\begin{definition}[Coxeter-Knuth insertion] Suppose 
$P$ is a row-and-column strict increasing tableau. Let $x$
be a number to be inserted
into $P$. Initialize the 
row index $i=1$, and follow
the steps below.
\begin{enumerate}
    \item If $x$ is larger than
    or equal to
    all numbers in row $i$ or if row
    $i$ is empty,
    append $x$ at the end
    of the row, and stop. 
    
    \item Otherwise, let $z$ be the leftmost
    number in row $i$ larger than 
    $x$. 
    \begin{itemize}
        \item[(a)] If $z=x+1$ and the value 
        of $x$ is already present in
        row $i$, leave row $i$ unchanged,
        increment $x$ by 1, increment
        $i$ by 1, and go to Step 1. Otherwise
        go to Step 2(b).
        
    \item[(b)] Put $x$ in the position
    of $z$ in $P$ and 
    let $z$ be
    the new value of $x$. Increment
    $i$ by 1,
    then go to Step 1.
    \end{itemize}
\end{enumerate}
\end{definition}
\begin{definition}[Edelman-Greene map] Suppose $\pi$
is a permutation and $\mathbf{i}=i_1i_2\cdots i_\ell$
is a reduced word of $\pi$.
Initialize $P^0$ and $Q^0$ to
both be empty tableaux. For each $j=1,2,\cdots, \ell$, construct 
$P^j$ by inserting the 
number $i_{\ell+1-j}$ into
$P^{j-1}$ using the Coxeter-Knuth
insertion, and $Q_j$ by adding
a new box to
$Q^{j-1}$ so that $P^j$ and 
$Q^j$ have the same shape, and
fill this box with $i$.
Let the \textbf{insertion tableau}
$P(\mathbf{i})$ be $P^\ell$
and  the \textbf{recording tableau}
$Q(\mathbf{i})$ be $Q^\ell$. We also say that
$P(\mathbf{i})$ is the \textbf{reduced word tableau}
of $\mathbf{i}$.
\end{definition}

\begin{remark}
Note that we insert a reduced
word from right
to left into a tableau.
This choice is made for the 
convenience of stating our main
theorem, and is consistent with
the convention used in
\cite{LLS}. For this reason, we will define our
reading order for the insertion
tableau to be row-by-row
from top to bottom, and
right-to-left within each row.
\end{remark}

\begin{definition}
The \textbf{Coxeter-Knuth equivalence}
on the set of reduced words of $\pi$ is
generated by the following elementary
relations:
\begin{itemize}
    \item[(a)] $\cdots jik\cdots \sim \cdots jki\cdots$ if 
    $i<j<k$,
    \item[(b)] $\cdots ikj\cdots \sim \cdots kij\cdots $ if $i<j<k$,
    \item[(c)] $\cdots i(i+1)i\cdots \sim (i+1)i(i+1)$.\qedhere
\end{itemize} 
\end{definition}
Recall that the descent sets of
a standard tableau of shape $\lambda$ is
defined as 
$\Des(S):=\{j: j+1 \text{ appears in a lower row than }j\}$, and the descent set of a reduced word
$\mathbf{i}=i_1\cdots i_l$ is
$\Des(\mathbf{i})=\{j:i_j>i_{j+1}\}$. 
We write the reverse of a reduced word
$\mathbf{i}$ as $\mathbf{i}^{-1}$.
\begin{theorem}[Edelman-Greene correspondence \cite{edelman1987balanced}]
\label{thm:eg}
The map
\[\EG:\mathbf{i}\mapsto (P(\mathbf{i}),Q(\mathbf{i}))\] is 
an injective map from the set of
reduced words for
$\pi$, $\red(\pi)$, to the set of pairs of tableaux
$(P,Q)$ where $P$ is a row-and-column
strict increasing tableau and $Q$ a standard tableau. 
This map has the following properties:
\begin{itemize}
    \item[(a)] For
each $P$, every  standard tableau
$Q$
with the same shape as $P$ appears in the
image.
\item[(b)] Two reduced words of $\pi$ are
Coxeter-Knuth equivalent if and only if
they have the same insertion tableau.
\item[(c)] If $\EG(\mathbf{i})=(P,Q)$
and $Q'$ is another standard tableau,
then $\EG^{-1}(P,Q')$ is Coxeter-Knuth
equivalent to $\mathbf{i}$.
\item[(d)] For each $\mathbf{i}\in\red(\pi)$,
$\Des(\mathbf{i}^{-1})=\Des(Q(\mathbf{i}))$. \qedhere
\end{itemize}
\end{theorem}

We now recall some basic facts about Stanley symmetric fucntions.
For $\pi$ a permutation, the set of reduced compatible
sequences $\operatorname{RCS}(\pi)$ is defined as  $\operatorname{RCS}(\pi):=\{(\mathbf{i},\mathbf{r}):\mathbf{i}=i_1i_2\cdots i_{\ell(\pi)}\in\red(\pi),\mathbf{r}=r_1\cdots r_{\ell(\pi)}, 1\le i_1\le \cdots \le i_{\ell(\pi)}, i_j<i_{j+1}\implies r_j<r_{j+1}\}.$
The \textbf{Stanley symmetric functions} $F_\pi$ 
is defined as
\[F_\pi=\sum_{(\mathbf{i},\mathbf{r})\in\operatorname{RCS}(\pi)} x_{\mathbf{r}},\]
where $x_{\mathbf{r}}=x_{r_1}\cdots x_{r_{\ell(\pi)}}$
for $\mathbf{r}=r_1\cdots r_{\ell(\pi)}$. It is
well-known that the expansion
$F_\pi=\sum_{\lambda}j^\pi_\lambda s_\lambda$ of $F_\pi$ into
the Schur basis has positive coefficients
known as the \textbf{Edelman-Greene coefficients}. The coefficient $j_\lambda^\pi$ has
the combinatorial interpretation
of counting the number of reduced word tableaux of
shape $\lambda$ for $\pi$. This can be proved by
a variant of Theorem~\ref{thm:eg} that bijects 
$\operatorname{RCS}(\pi)$ with the set of pairs of
$(P,Q)$ where $P$ is a row-and-column strict 
increasing tableau whose reading word is a reduced
word of $\pi$, $Q$ a \emph{decreasing} semi-standard
Young tableau of the same shape as $P$. 
($Q$ being decreasing is due to our convention
that inserts a reduced compatible sequence from right to left.)

We also recall some facts about the recording tableaux
for the Edelman-Greene correspondence.
\begin{definition}
Let $Q\in \SYT(\lambda)$. For $1\le i \le |\lambda|-2$,
define the \textbf{elementary dual equivalence}
$h_i$ as an action on $Q$ such that $h_i$ fixes
all $j$ for $j\not\in\{i,i+1,i+2\}$, and if the
reading word of $Q|_{\{i,i+1,i+2\}}$ is of the form
\[xiy\text{ or }x(i+2)y,\] swaps the entries $x$
and $y$ in $Q$, and fixes it otherwise. 
\end{definition}
It is known that the set of standard Young tableaux
are connected by the elementary dual equivalences, see e.g. 
\cite[Proposition 2.14]{haiman1992dual}. Furthermore, dual equivalence 
characterizes the Coxeter-Knuth moves on reduced words, 
see e.g. \cite[Theorem 6.24]{edelman1987balanced},
\cite[Proposition 2.2(a)]{hamaker2014bijective}.

\subsection{Jeu de taquin}
We recall Sch\"utzenberger's
\textbf{jeu de taquin} algorithm on
semi-standard skew tableaux.
Our reference is \cite[Chapter 1]{fulton1997young}.

Let $T$ be a semi-standard skew
tableau of shape $\nu/\mu$. 
The jeu de taquin algorithm on
$T$ rectifies $T$ into a semi-standard tableau by
iterating the following steps.
\begin{itemize}
    \item[(1)] Pick a SE-most empty box 
    in the NW empty region to be 
    ``active''.
    \item[(2)] Repeat this step
    until the current 
    active empty box 
    is on the SE border: pick
    the smaller number from the
    box to the right and the box
    directly below the active
    empty box, and slide that box
    into the active empty box. 
    If there is a tie, pick
    the box below. The newly
    created empty box after sliding
    becomes the new active
    empty box.
\end{itemize}

It is well-known that the final
output is independent of the
choice made in
in Step (1). Given partitions
$\lambda,\mu,\nu$, the Littlewood-Richardson coefficient
$c_{\lambda,\mu}^\nu$ is the number
of skew semi-standard Young
tableaux of shape $\lambda*\mu$ 
that are jeu de taquin equivalent 
to some fixed semi-standard Young tableau of shape $\nu$,
where $\lambda*\nu$ is obtained by placing the
partition diagram of $\mu$ immediately NE of the
diagram of $\lambda$.

\section{Rectification and Insertion}
In this section, we introduce a reverse
insertion algorithm, which we call rectification,
on bumpless pipe dreams with a partition of
marked blank tiles at the NW corner. 
The terminology ``rectification'' comes from
the rectification algorithm jeu de taquin
on skew semi-standard
tableaux, and this connection is explained in
Section \ref{sec:jdtconnection}. We also introduce
an insertion algorithm of a reduced word
into a bumpless pipe dream and when successful,
produces a bumpless pipe dream with a 
partition of marked blank tiles
at the NW corner. Rectification and insertion
are inverses of each other. Our rectification
and insertion generalize those introduced in
\cite{LLS} that realize the
Edelman-Greene correspondence with bumpless
pipe dreams, and will be the main
technical tool to give a combinatorial
interpretation of the separated descent
Schubert structure constants, as well as
relating those to the Edelman-Greene coefficients.

In the rest of the paper, when we say $\sigma\in S_{[a,b]}$
and $F\in\BPD(\sigma)$, we specifically consider
$F$ as a bumpless pipe dream with rows and
columns indexed by $[a,b]$. 
\subsection{Rectification}
\label{subsec:recinsert}
Let $\sigma\in S_{[a,b]}$.
Define $\BPDx(\sigma)$ to be the set
of bumpless pipe dreams of $\sigma$ such that
each tile in a (possibly empty) partition contained inside the
connected region of blank tiles at the NW
corner of $F$ is marked with ``$\mathsf{x}$''.
We require the NW corner of this partition to be
anchored at the NW corner of the bumpless pipe dream.
 Given $F\in \BPDx(\sigma)$,
let $\lambda(F)$ denote the partition formed by tiles
marked with ``$\mathsf{x}$'', and
let $Q\in\SYT(\lambda(F))$. 
We will now describe a rectification
process on $F$ that iteratively removes these
marked tiles 
according to the order specified
by $Q$
and produces a reduced word
of length $|\lambda|$.
During
the entire rectification process, we
also will make sure to maintain the
invariance of the number of 
non-marked blank tiles in each row.

To start, let $\mathbf{i}$ be the empty
reduced word.
Let
$Q'$ be a working copy of $Q$.
In each iteration of the rectification,
find the coordinate $(x,y)$
of $Q'$ with the largest value,
and delete this box from $Q'$.
We then pick the marked blank tile
at $(x,y)$ to be the active one and
perform the following operations.

\begin{itemize}
    \item[(1)] If the current marked tile is not the
    rightmost tile of a contiguous
    block of blank tiles on a row, slide
    the mark to the rightmost tile.
    \item[(2)]
    Suppose the current mark is at position
    $(i,j)$. Let $p$ be the pipe that passes
    through $(i,j+1)$. 
    \begin{itemize}
        \item[(a)] If pipe $p$ contains a 
    $\jt$ in column $j+1$, let $(i',j+1)$ be
    the coordinate of this $\jt$-tile. In other
    words, $i'>i$ is the smallest such that
    $(i',j+1)$ is a $\jt$-tile. First temporarily
    ignore the pipes that cross $p$ in columns
    $j$ and $j+1$ and between row $i$ and $i'$. (Note
    that since no two pipes can cross twice,
    $p$ must be the only pipe that passes through
    $(i',j)$.) Undroop $p$ at $(i',j+1)$ into
    $(i,j)$ so that $(i,j)$ becomes an $\rt$-tile
    and $(i',j+1)$ becomes a blank tile. Move the mark
    at $(i,j)$ to $(i',j+1)$. Now adjust pipes that 
    cross $p$ between row $i$ and $i+1$ so their
    ``kinks shift right''. 
    See Figure~\ref{fig:colmoves}. 
    Go back to step (1).
        \item[(b)] Otherwise, $p$ originates
        from column $j+1$, i.e., $p=j+1$. In this case,
        pipes $j$ and $j+1$ must cross at some
        tile $(i',j+1)$ with $i'>i$. replace this cross
        with a $\bt$-tile, and resolve this bump by undrooping
        the NW-elbow into $(i,j)$,
        and adjust other pipes if necessary in a similar
        fashion as in Step (2a), 
        as shown in Figure \ref{fig:colmove2}. This move
        decrements the number of marked tile by 1 and
        is the end of this iteration of
        rectification. Append $j$ to the end of
        the reduced word $\mathbf{i}$.
        If $F'$ is the marked bumpless pipe dream
        in the beginning of this iteration,
        define $\pop(F', (x,y))$ to be $j$
        and $\nabla(F',(x,y))$ the resulting
        (marked) bumpless pipe dream with one
        fewer mark than $F'$.
    \end{itemize}
     
The process completes when 
$Q'=\emptyset$ and
there
are no marked tiles in the grid.
Denote the output reduced word
$\mathbf{i}$ as $\word(F,Q)$,
and the bumpless pipe dream at
the end of the algorithm $\rect(F,Q)$.
    \begin{figure}[h]
        \centering
        \includegraphics[scale=0.7]{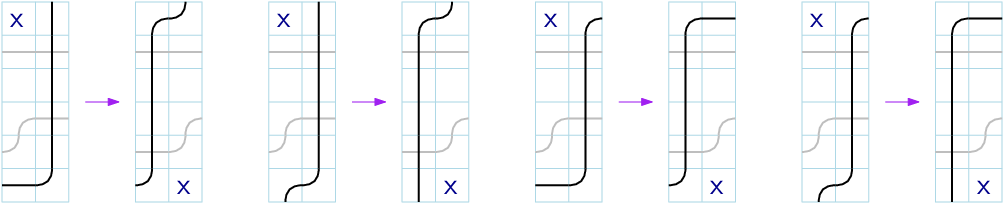}
        \caption{Column moves for rectification in Step (2a)}
        \label{fig:colmoves}
    \end{figure}
    
\begin{figure}[h]

    \centering
    \includegraphics[scale=0.7]{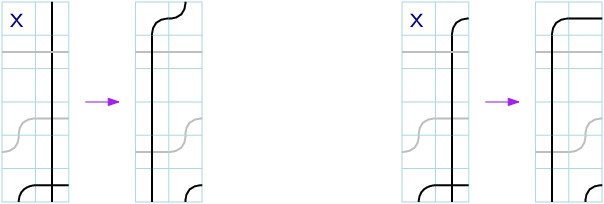}
    \caption{Terminal column move in Step (2b)}
   \label{fig:colmove2}
\end{figure}
\end{itemize}

\begin{example}
Figure~\ref{fig:rect-ex} shows an example of an
iteration of the rectification algorithm. Suppose
the partition of marked tiles 
is anchored at $(1,1)$. This
iteration produces the simple reflection $s_7$.
If $Q\in\SYT((3,2))$ determines that 
the order for removal of the outer boxes of marked
tiles to be $(2,2)$, $(1,3)$, $(2,1)$, $(1,2)$,
$(1,1)$, the resulting reduced word 
$\mathbf{i}=(7,9,6,8,3)$ (or $s_7s_9s_6s_8s_3$ 
in conventional notation).
\end{example}

\begin{figure}[h]
    \centering
    \includegraphics[scale=0.55]{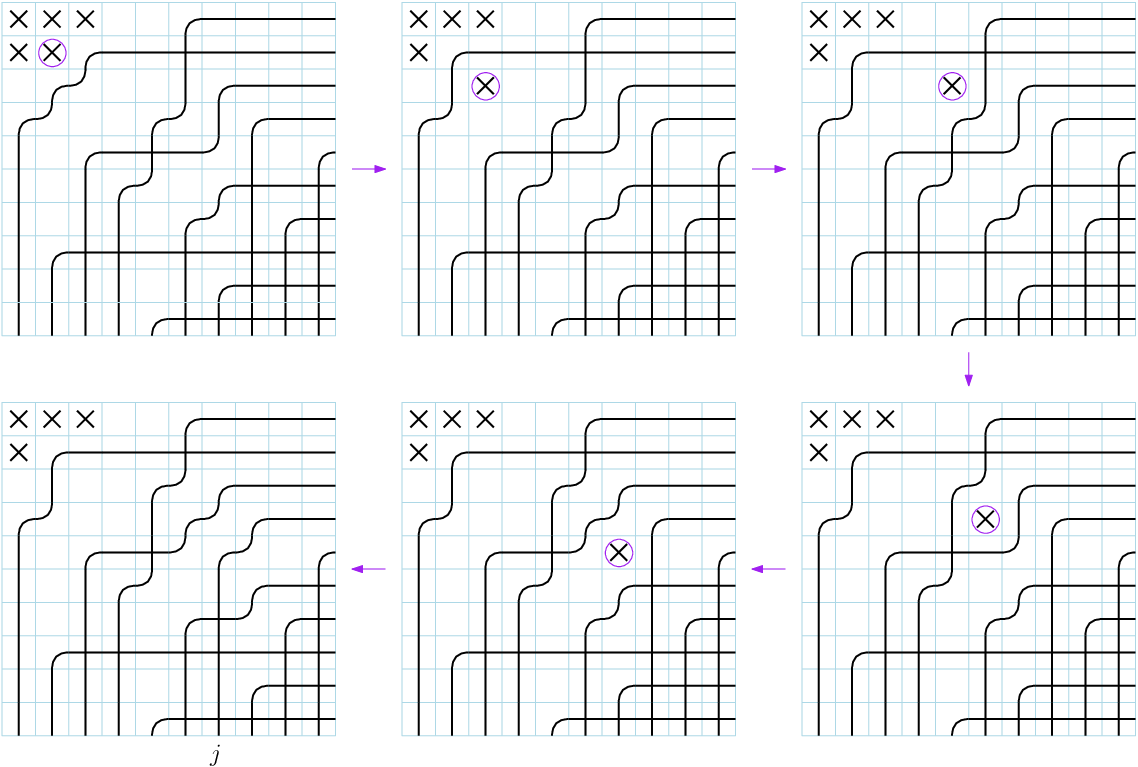}
    \caption{An iteration of rectification}
    \label{fig:rect-ex}
\end{figure}
\begin{remark}
These column moves are slight 
generalizations of 
(the backward direction of)
the column moves
defined in \cite[Section 5.7]{LLS} for a bumpless
pipe dream description of the Edelman-Greene correspondence.
The only difference is that here blank
tiles strictly between rows $i$ and $i'$
are allowed, whereas in \cite{LLS}
they are forbidden. In Step (1) we have
the extra sliding move
that sends 
the marked tile across
a contiguous block of blank tiles
from one end to the other, which
was not needed in their setting. This simple generalization has powerful
consequences. In Section~\ref{sec:jdtconnection},
we relate these moves to jeu de taquin on
semi-standard Young tableaux. 
Because both jeu de taquin and
the Edelman-Greene insertion are realized by
these column moves, we are able to connect
the separated-descent Schubert structure constants
to Edelman-Greene coefficients.
The same
column moves are also used in constructing the
canonical bijection between 
pipe dreams and bumpless pipe dreams \cite{gao2021canonical}.
\end{remark}

\subsection{Insertion}
We now describe an insertion algorithm, which
is the reverse of the rectification algorithm.
Let $F\in\BPDx(\sigma)$ for some 
$\sigma\in S_{[a,b]}$.
We emphasize that we consider the NW corner of $F$
to be position $(a,a)$. 
Let $a\le j < b$. If
pipes $j$ and $j+1$ already cross, the
insertion of $j$ into $F$, denoted $F\leftarrow j$, is not defined. Otherwise, let $(i,j)$
be the location of the $\rt$-tile with $i$
largest, and $(i',j+1)$ the location of the
$\rt$-tile with $i'$ largest. Perform the 
reverse of the terminal column move as 
described in Step (2b) of the rectification
algorithm
(Figure~\ref{fig:colmove2}), which crosses
pipes $j$ and $j+1$ at $(i',j+1)$ and creates
a blank tile at $(i,j)$. Place a mark
at $(i,j)$ and let it be the active one. 
\begin{itemize}
    \item[(1)] If the active mark is not the leftmost
    tile of a contiguous block of unmarked blank tiles
    in a row, slide it to the leftmost one.
    \item[(2)] Suppose the active marked tile
    is at $(i,j)$, and the pipe that passes
    through $(i,j-1)$ is $p$, if it exists. 
    \begin{itemize}
        \item[(a)] Let $(i',j-1)$
    be the coordinate of the $\rt$-tile with
    $i'<i$ largest. Perform the reverse of
    the column move described in Step (2a)
    of the rectification algorithm (Figure~\ref{fig:colmoves}) so that 
    $p$ droops into $(i,j)$, pipes 
    intersecting $p$ between row $i$ and 
    $i+1$ have their ``kinks shift left''
    within columns $j$ and $j+1$, and $(i', j-1)$ becomes a blank tile. Move the mark
    from $(i,j)$ to $(i',j-1)$, and go back to Step (1).
       \item[(b)] If $p$ does not exist and the active mark is connected to
       the existing partition of marked tiles, forming
       a new partition with one more box,
       (or at $(a,a)$ if no tiles were initially marked,) terminate the 
       algorithm with success, and the resulting bumpless pipe dream
       with marked blank tile is denoted $F\leftarrow j$. If the active mark is  disconnected to the existing partition of marked
       tiles, the insertion algorithm fails and
       $F\leftarrow j$ is undefined. 
    \end{itemize}
\end{itemize}

Define the \textbf{insertion footprints} for
$F\leftarrow j$ as the set of
positions the mark appears
at \emph{that later become \jt-tiles}
during the insertion process, plus its
final position. 
We observe that an insertion footprints
set 
consists of tiles that run SW
to NE, with no two tiles on the same
row or column.

\begin{example}
If we reverse the arrows in Figure~\ref{fig:rect-ex},
we get an example of insertion $j$ into the bottom-left
bumpless pipe dream. The insertion footprints consists
of coordinates $(5,7)$, $(4,6)$, $(3,3)$, $(2,2)$, namely the circled positions except for $(3,5)$.
\end{example}
Having defined the insertion of a single simple reflection,
we define the insertion of a reduced word into a 
$F\in\BPDx(\sigma)$.
Let $w\in\S_{[a,b]}$ and $\mathbf{i}=i_1i_2\cdots i_l$ a
reduced word for $w$. Suppose $\ell(w\sigma)=\ell(w)+\ell(\sigma)$.
We define $F\leftarrow \mathbf{i}$ as $(((F\leftarrow i_\ell)\leftarrow i_{\ell-1})\leftarrow\cdots)\leftarrow i_1$,
if every iteration of the insertion is successful.
Otherwise, $F\leftarrow \mathbf{i}$ is undefined.
If $\lambda(F)=\emptyset$, i.e., $F\in\BPD(\sigma)$, the saturated chain of partitions
$\lambda(F\leftarrow i_l)\subset\lambda((F\leftarrow i_l)\leftarrow i_{l-1})\subset\cdots\subset \lambda(F\leftarrow\mathbf{i})$ gives rise to a standard Young tableau of
shape $\lambda(F\leftarrow\mathbf{i})$, which we denote by $Q(F\leftarrow \mathbf{i})$.

When $F$ is the identity bumpless pipe dream, the
insertion of $\mathbf{i}$ into $F$ recovers the
Edelman-Greene correspondence with bumpless pipe
dreams, as described in \cite[Section 5]{LLS}.
The insertion algorithm described above is a generalization
that allows $F$ to be an arbitrary bumpless pipe dream of
an arbitrary permutation $\sigma\in S_{[a,b]}$ with the caveat that the insertion might fail, even when $\sigma$ and the reduced word being inserted are length-additive. However,
this failure can always be remedied by considering $\sigma$
as a permutation in $S_{[a',b]}$
for some $a'< a$ small enough, or in other words,
enlarge $F$ by adding
enough pipes that give the identity
permutation outside the NW corner of
$F$. This is always possible
given $\mathbf{i}$. We refer to this
process as \textbf{back-stabilizing}
$F$. This idea will be useful later, cf. \!Lemma~\ref{lem:backstable}. The idea of
back-stabilizing has showed up in recent work
of Pechenik and Weigandt \cite{PeWei} where 
they gave a positive rule for Schubert products
for inverse Grassmannian permutations, even
though completely different combinatorial
objects are used.

\begin{example}
In Figure~\ref{fig:ins-ex}, the bumpless pipe dream
$F$ is the result of rectification of the top-left
bumpless pipe dream $D$ in Figure~\ref{fig:rect-ex}. Therefore,
the insertion 
$F\leftarrow (7,9,6,8,3)$
 results in $D$. Note that the insertion
$F\leftarrow(6,7,9,6,8,3) = D\leftarrow 6$ is
not defined. However, if we back-stablize $F$
by adding one more pipe numbered 0 and
get $F'$, $F'\leftarrow (6,7,9,6,8,3)$
is defined.
\end{example}
\begin{figure}[h]
    \centering
    \includegraphics[scale=0.55]{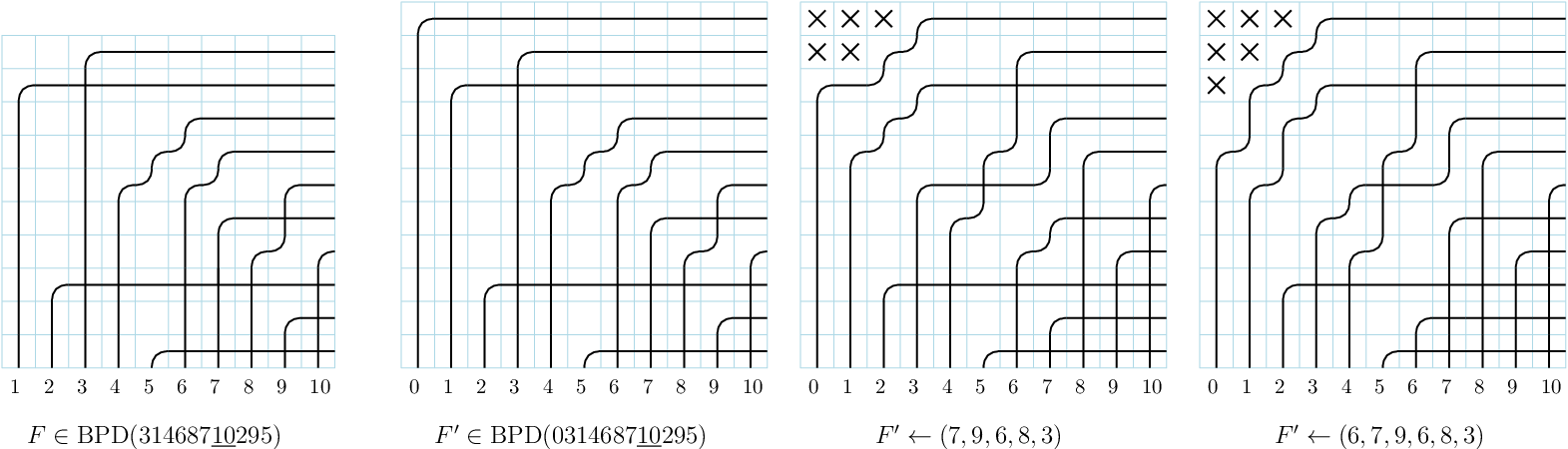}
    \caption{Back-stabilization remedies failures of insertion}
    \label{fig:ins-ex}
\end{figure}

We state a simple lemma that gives a sufficient
condition for one iteration of insertion to 
succeed.
\begin{lemma}
\label{lem:insok}
Suppose $\sigma\in S_{[a,b]}$ and $F\in\BPDx(\sigma)$. Let $\lambda$ denote
the set of marked blank tiles in $F$.
If 
$\lambda$ is not adjacent to any blank tiles,
then $F\leftarrow i$ is defined as long as
pipes $i$ and $i+1$ do not already cross.
\end{lemma}
\begin{proof}
When $i$ and $i+1$ do not cross, 
suppose the insertion of $i$ into $F$ fails
at the final step.
Notice that there are no
pipes weakly NW of this mark; otherwise
the algorithm would not have terminated. 
Because the algorithm
always greedily slides the active mark
to the leftmost tile of a row of contiguous
blank tiles, there must not be any unmarked
blank tiles west of the active mark.
Therefore, the failure is caused by
a vertical strip of unmarked blank tiles 
between the active
mark and $\lambda$. This is not
possible if if $\lambda$ is not adjacent
to any blank tiles, because each column move
always replaces the top-left $\rt$ 
in the bounding rectangle with a mark.
\end{proof}

\subsection{Properties of insertion and rectification}
In this subsection, we discuss properties of
insertion and rectification. 
Corollary~\ref{cor:popdes},
Propositions~\ref{prop:undo}
and \ref{prop:lls} are the main properties of rectification
and insertion required to prove Theorem~\ref{thm:main},
the main theorem. Lemmas~\ref{lem:ij}, \ref{lem:ijrev},
\ref{lem:knuth}, \ref{lem:coxeter} are technical
analysis of the insertion algorithms required to prove
Propositions~\ref{prop:undo} and \ref{prop:lls}. Finally, 
Corollary~\ref{cor:defined} and Lemma~\ref{lem:backstable}
give some necessary conditions under which insertions are defined, which are necessary for the main theorem.
\begin{lemma}
\label{lem:ij}
Let $F\in\BPDx(\sigma)$ and suppose $i<j$. 
When all insertions are defined,
\begin{itemize}
    \item[(1)] The insertion footprints
for $(F\leftarrow i)\leftarrow j$  are
strictly 
to the N/E/NE of the insertion 
footprints for
$F\leftarrow i$. Or in other words,
no insertion footprint of 
$F\leftarrow i$ is N/E/NE
of any insertion footprint of
$(F\leftarrow i)\leftarrow j$.
    \item[(2)] The insertion
    footprints for $(F\leftarrow j)\leftarrow i$ are strictly to the S/W/SW of
    the insertion footprints for 
    $F\leftarrow j$.\qedhere
\end{itemize}
\end{lemma}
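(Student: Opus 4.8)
The plan is to prove part (1) by tracking the marked tile through the course of the second insertion and showing, by induction on the moves performed, that it never crosses into the region dominated by the footprints of the first insertion; part (2) will then follow by the mirror-image argument. Throughout, write $G := F\leftarrow i$ and let $\mathcal{P}$ be the footprint set of $F\leftarrow i$. By the observation recorded just before the statement, $\mathcal{P}$ is a chain of tiles running SW to NE occupying pairwise distinct rows and columns, so it suffices to control the position of the single marked tile carried by the insertion $G\leftarrow j$ relative to $\mathcal{P}$.

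The invariant I would maintain is: at every stage of the insertion $G\leftarrow j$, if the marked tile sits at $(r,c)$, then for every footprint $(r',c')\in\mathcal{P}$ we have $r\le r'$ and $c\ge c'$, with the two coordinates not simultaneously equal; that is, the marked tile lies strictly N/E/NE of all of $\mathcal{P}$. Since every footprint of $G\leftarrow j$ is a position the marked tile visits, the invariant immediately yields the conclusion. For the base case, recall that $G\leftarrow j$ begins by creating a marked tile in column $j$; because $j>i$ and the first insertion was initiated in column $i$, this starting tile lies strictly east of column $i$, and combined with the SW-to-NE shape of $\mathcal{P}$, a short check of the initiating move of Figure \ref{fig:colmove2} shows it is already strictly N/E/NE of all of $\mathcal{P}$.

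The inductive step is the main obstacle and requires a case analysis over the moves of Figures \ref{fig:colmoves} and \ref{fig:colmove2} together with the blank-crossing slide of Step (1). Away from $\mathcal{P}$, each move displaces the marked tile weakly north and/or weakly west in a way that is already handled exactly as in Lemma 5.24 of \cite{lam2018back}, so the only delicate situation is when the marked tile of $G\leftarrow j$ becomes adjacent to a footprint of $\mathcal{P}$. Here I would use the precise local picture left behind by the first insertion: each footprint of $\mathcal{P}$ is recorded as a \jt-tile whose surrounding pipes are fixed, and the column move of the $j$-insertion, which seeks the \emph{minimal} admissible row for the undroop, is thereby forced to pick up a pipe lying to the N/E of that footprint rather than through it. This prevents the mark from sliding S or W past a footprint and shows the invariant is preserved. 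The genuinely new ingredient relative to \cite{lam2018back} is the slide of Step (1) across a contiguous block of blank tiles; since such a block lies weakly north of the footprints it could interact with, the slide only translates the mark within a region already N/E/NE of $\mathcal{P}$, and the invariant persists.

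Finally, part (2) follows by the same argument with every compass direction reversed: one inserts $j$ first, lets $\mathcal{P}$ be the footprints of $F\leftarrow j$, and maintains that the marked tile of $(F\leftarrow j)\leftarrow i$ stays strictly S/W/SW of $\mathcal{P}$. The base case now uses $i<j$ to place the initial mark strictly west, and the inductive case is the mirror image of the one above, with the minimality in the column move forcing the smaller-indexed insertion to remain to the SW. I expect the adjacency analysis in the inductive step to be the part demanding the most care, exactly as in the original setting of \cite{lam2018back}.
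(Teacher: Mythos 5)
Your invariant is false as stated, and it is stronger than what the lemma actually asserts. Read the ``in other words'' clause of the statement carefully: the content is the \emph{pairwise, disjunctive} condition that no footprint of $F\leftarrow i$ is N/E/NE of any footprint of $(F\leftarrow i)\leftarrow j$, i.e.\ each new footprint is strictly north of, or strictly east of, each old one. Your invariant demands the \emph{conjunctive} domination $r\le r'$ \emph{and} $c\ge c'$ against \emph{every} $(r',c')\in\mathcal{P}$ simultaneously, and this generically fails at both ends of the $j$-insertion. At the start: the mark for $j$ is created in column $j$ deep in the diagram and can sit strictly \emph{south} of the first (southernmost) footprint $(x_1,y_1)$ of $F\leftarrow i$; the paper's proof only establishes a row bound \emph{after} the $j$-path first reaches columns $\le i+1$, and proving that entry bound is itself the heart of the argument. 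At the end: the $j$-mark terminates at a new box of the marked partition in the NW corner, whose column is typically strictly smaller than $y_1\le i$, so $c\ge c'$ fails against the earliest footprints; indeed, already in the paper's first case --- where the footprints of $(F\leftarrow i)\leftarrow j$ are exactly those of $F\leftarrow i$ translated one column east --- the later new footprints lie strictly west of the earlier old ones. So the induction you propose cannot even get started, and the statement it would prove is false; the lemma survives only because it asserts the weaker non-domination in one direction.

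Even if one repairs the invariant to the disjunctive form (the mark is never weakly S-and-W of any footprint of $\mathcal{P}$), your inductive step asserts precisely the crux without proof: nothing local prevents a westward slide of the $j$-mark, at a row south of some footprint, from crossing that footprint's column, and ``minimality of the admissible row'' in the column move does not by itself force the picked-up pipe to lie N/E of a nearby footprint. The paper closes exactly this hole with two structural inputs that your proposal has no analogue of: (i) the southernmost possible \rt-tile in column $i+1$ of $F\leftarrow i$ is at $(x_1,i+1)$, so the $j$-insertion path first enters columns $\le i+1$ at or above row $x_1$; and (ii) the zigzag strip swept by the $i$-insertion is, away from its corner tiles, occupied by pipes, and since the $j$-mark moves west only through blank tiles and north only via column moves, its strip can touch but never cross the earlier one (with a separate check for the degenerate case where vertical segments of the two strips coincide, which collapses to the translated-footprints case). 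A further small inaccuracy: you treat each footprint as a \jt-tile ``whose surrounding pipes are fixed,'' but the final footprint is a marked blank in the partition, not a \jt-tile, and later insertions can disturb the local picture around earlier footprints --- this is exactly the delicate situation analyzed in the proof of Lemma \ref{lem:knuth}. Without (i) and (ii), your case analysis over the moves of Figures \ref{fig:colmoves} and \ref{fig:colmove2} cannot be completed.
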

\begin{proof}
We show (1); (2) is similar.

Suppose the first tile in the insertion
footprints of $F\leftarrow i$ is $(x_1, y_1)$,
and consider the bumpless pipe dream
$F\leftarrow i$. We know that
$y_1\le i$.
We consider the set of tiles that 
either lies in the ``upper hook'' 
inside the width 2 rectangle
that bounds the column move that
\emph{has happened for
inserting $i$}, 
or in a contiguous block of blank
tiles on a row that bounds the ``sliding''
move of the marked tile. Note that
the insertion footprints are the tiles
at the SW corners of the 
green ``zigzag''
strip illustrated in Figure
\ref{fig:ij}. 

\begin{figure}

    \centering
    \includegraphics[scale=0.6]{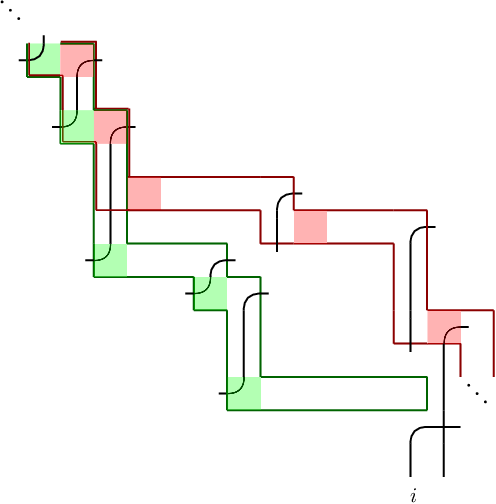}
    \caption{Insertion 
    footprints for $F\leftarrow i$ (green) and $(F\leftarrow i)\leftarrow j$ (red) for $i<j$ in the bumpless pipe dream 
    $F\leftarrow i$}
    \label{fig:ij}
\end{figure}
Notice that the southernmost
possible
coordinate of the lowest
\rt-tile in
column $i+1$ is
$(x_1,i+1)$.
Therefore,
the first tile in the insertion path
of $(F\leftarrow i)\leftarrow j$ with
column index no greater than $i+1$
is at or above row $x_1$. 
If it is at row $x_1$, its coordinate
must be $(x_1',y_1')=(x_1,y_1+1)$. 
In this case, the insertion footprints
for $(F\leftarrow i)\leftarrow j$
are exactly the set of tiles immediately
to the right of each tile in the
insertion footprints of $(F\leftarrow i)$.
Otherwise,
it is on
$(x_1', y_1')$ with
$x_1'<x_1$.
We consider the set of tiles 
that lie in the ``lower hook'' 
inside the width 2 rectangles that
bound column moves that are 
\emph{about to happen for inserting
$j$}, union the contiguous blocks of
blank tiles on a row that bound
the sliding moves of the marked tile.
The insertion footprints of 
$(F\leftarrow i)\leftarrow j$
are the tiles immediately to the 
right of the SW corners of this
zigzag strip. Now imagine travelling
from $(x_1', y_1')$ along this region.
Since going west horizontally
allows only the use of blank tiles,
we may never cross past the green strip.
In the case when two
red and green
vertical segments coincide, we
end up in the same situation as the
first case. Thus our claim
about the insertion footprints
follows.
\end{proof}
Running the proof of Lemma~\ref{lem:ij} backwards,
we have the analogous statements for rectification.
\begin{lemma}
\label{lem:ijrev}
Let $F\in\BPDx(\sigma)$, and $b_1:=(x_1,y_1)$,
$b_2:=(x_2,y_2)$ be two outerboxes of the partition
formed by marked blank tiles of $F$, such that
$x_1>x_2$. Namely, $b_1$ lies SW of $b_2$. Let $F_1:=\nabla(F, b_1)$ 
and $F_2:=\nabla(F,b_2)$. Then
\begin{itemize}
    \item[(1)] $\pop(F, b_1) < \pop(F_1,b_2)$
    \item[(2)] $\pop(F, b_2) > \pop(F_2,b_1)$.\qedhere
\end{itemize}
\end{lemma}

Recall that $\word(F,Q)$ denotes the reduced
word obtained by the rectification algorithm on 
$F$ in the order specified by $Q$. The following
Corollary follows from Lemma~\ref{lem:ijrev}.
\begin{corollary}
\label{cor:popdes}
Let $F\in\BPDx(\sigma)$ and $Q\in \SYT(\lambda(F))$,
and let $\mathbf{i}:=\word(F,Q)$
Then $\Des(\mathbf{i}^{-1})=\Des(Q)$. 
\end{corollary}
\begin{lemma}
\label{lem:knuth}
Let $F\in\BPDx(\sigma)$ and suppose $i<j<k$. When all the insertions
are defined,
\begin{itemize}
    \item[(1)] $((F\leftarrow j)\leftarrow i)\leftarrow k = ((F\leftarrow j)\leftarrow k)\leftarrow i$
    \item[(2)] $((F\leftarrow i)\leftarrow k)\leftarrow j = ((F\leftarrow k)\leftarrow i)\leftarrow j$. \qedhere
\end{itemize}
\end{lemma}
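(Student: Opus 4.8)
The plan is to read both identities as the bumpless-pipe-dream incarnations of the two commutation Coxeter--Knuth relations. Unwinding the right-to-left insertion convention of $F\leftarrow\mathbf{i}$, statement (1) compares the words $kij$ and $ikj$, and statement (2) compares $jki$ and $jik$; since $i<j<k$ these are precisely relations (b) and (a) of Coxeter--Knuth equivalence. In both identities the two sides differ only in the order in which $i$ and $k$ are inserted, the ``separating'' letter $j$ being inserted either first (case 1) or last (case 2). The engine throughout is Lemma \ref{lem:ij}, which places the footprints of a later insertion strictly to the N/E/NE of those of the earlier one when its value is larger, and strictly to the S/W/SW when smaller. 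I emphasize that one may \emph{not} reduce to the bare claim $(F\leftarrow i)\leftarrow k=(F\leftarrow k)\leftarrow i$: even for $k\ge i+2$ this fails in general, so the presence and position of $j$ is essential.

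For (1) I would argue by separation. Write $G:=F\leftarrow j$. Lemma \ref{lem:ij}(2) applied to the pair $i<j$ shows the footprints of $G\leftarrow i$ lie strictly S/W/SW of the $j$-footprints that anchor $G$, while Lemma \ref{lem:ij}(1) applied to $j<k$ shows the footprints of $G\leftarrow k$ lie strictly N/E/NE of them. Since the $j$-footprints form a SW-to-NE antichain (one tile per row and column), they cut the grid into a SW region and a NE region, with the $i$-insertion confined to the former and the $k$-insertion to the latter. The key sub-claim is that this footprint separation upgrades to disjointness of the two full \emph{zigzag strips} of modified tiles (the column-move rectangles of Figures \ref{fig:colmoves} and \ref{fig:colmove2} together with the blank blocks swept in Step (1)): between consecutive footprints the strip is a local hook or blank block that cannot cross the $j$-staircase. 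Granting this, inserting $i$ alters no tile read during the insertion of $k$, and conversely, so both orders equal $G$ with both (unchanged) strips applied, giving $(G\leftarrow i)\leftarrow k=(G\leftarrow k)\leftarrow i$. This is the analogue of the proof of Lemma 5.24 of \cite{lam2018back}.

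Statement (2) is the genuinely harder case, and I expect it to be the main obstacle: here $j$ is inserted last, so the intermediate diagrams $(F\leftarrow i)\leftarrow k$ and $(F\leftarrow k)\leftarrow i$ are generally unequal, and the final $j$-insertion must reconcile them. I see two routes. The direct route tracks all three insertion paths at once: by Lemma \ref{lem:ij} the $i$- and $k$-footprints sit on opposite sides of one another, while the sandwiched $j$-path ($i<j<k$) meets both, and one must show by a local analysis of the moves where the $j$-strip abuts the $i$- and $k$-strips (the configurations of Figure \ref{fig:ij}) that the outcome is symmetric under exchanging the roles of $i$ and $k$; this parallels Lemma 5.25 of \cite{lam2018back}, the only new features being the sliding move of Step (1) and the interior blank tiles it may traverse. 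The cleaner route exploits a symmetry under reversing the inserted word: the reverses of the two words $jki,jik$ of (2) are exactly $ikj,kij$, the two words of (1). If the insertion operation carries word-reversal to a transpose-type involution on bumpless pipe dreams that exchanges the two column moves of Figure \ref{fig:colmoves} and reverses insertion order, then (2) follows formally from (1). I would first attempt to set up this involution; should it fail to interact cleanly with the Rothe shape and with back-stabilization, I would fall back on the direct three-path analysis.

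Throughout, I would maintain the invariant that the number of non-marked blank tiles in each row is preserved, so that every diagram in sight is a genuine bumpless pipe dream of the intended permutation, and I would verify the degenerate configurations in which a strip has length zero, since these are exactly where the local move analysis underlying the separation sub-claim (and the symmetric reconciliation in (2)) is most delicate.
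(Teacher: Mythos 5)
Your part (1) is correct and is essentially the paper's own argument: by Lemma \ref{lem:ij} the $i$-footprints lie strictly S/W/SW and the $k$-footprints strictly N/E/NE of the $j$-footprints of $F\leftarrow j$, so the two later insertions act in disjoint regions of the grid and commute; this matches the first paragraph of the paper's proof. (One small slip: the precedent for this lemma is Lemma 5.24 of \cite{lam2018back}; Lemma 5.25 is the braid case, handled here by Lemma \ref{lem:coxeter}.)

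Part (2) is where your proposal has a genuine gap, on both of your proposed routes. Your preferred route --- deducing (2) from (1) via an involution intertwining word reversal --- should be expected to fail, and you offer no candidate for the involution. Reversing the inserted word replaces $w$ by $w^{-1}$, and the only natural involution on bumpless pipe dreams, transposition, sends $\BPD(\sigma)$ to $\BPD(\sigma^{-1})$; under it the insertion $F\leftarrow i$, which multiplies $\sigma$ on the \emph{left} by a simple reflection and is row-biased (marks slide to the rightmost tile of a blank block in a row, and the moves of Figure \ref{fig:colmoves} are column moves), is carried to a right-multiplication, column-style insertion that is a different algorithm altogether; showing it agrees with the given one is comparable in difficulty to the lemma itself, and the paper explicitly flags this row-bias in its final remarks. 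That leaves your fallback, the direct three-path analysis, but there you only assert the goal (``the outcome is symmetric under exchanging $i$ and $k$'') without the mechanism that carries the paper's proof. Concretely, the paper first splits into cases: if no footprint $(x,y)$ of $F\leftarrow i$ has $(x,y+1)$ among the footprints of $(F\leftarrow i)\leftarrow k$, then the $i$-footprints remain \jt-tiles in $(F\leftarrow i)\leftarrow k$, the $i$-insertion can be undone and redone after $k$, and the genuine two-letter identity $(F\leftarrow i)\leftarrow k=(F\leftarrow k)\leftarrow i$ \emph{does} hold --- so your blanket warning against that identity, while correct in general, must be refined, since the case analysis pivots exactly on when it holds. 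In the interfering case the paper locates the first such tile $(x,y)$, advances the three marks to frontier positions $(x_i,y+1)$, $(x_k,y_k)$, $(x_j,y_j)$ satisfying $x<x_k<x_i$, $y_k>y+1$, $x_k<x_j\le x_i$, $y+1<y_j\le y_k$, and observes that the three droop moves advancing them all belong to a \emph{single active pipe}, so the two insertion orders yield the same local configuration (Figure \ref{fig:ikjequalskij}); only then does the separation argument of part (1) finish the proof. This single-pipe commutation and the case split that sets it up are the actual content of (2); without them your outline is a plan rather than a proof.
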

\begin{proof}
The proof  is similar to 
the proof of \cite[Lemma 5.25]{LLS}.
In particular, we supply the details
omitted in part (2) of their proof.

To show (1), we notice that since the
insertion footprints of 
$(F\leftarrow j)\leftarrow i$ lie
SW of the insertion footprints of
$(F\leftarrow j)$ by Lemma \ref{lem:ij},
the area NE of the insertion 
footprints is not affected by the
insertion of $i$. Similarly, the area
SW of the insertion footprints $(F\leftarrow j)$
is not affected by the insertion 
$(F\leftarrow j)\leftarrow k$. Therefore,
after the insertion $(F\leftarrow j)$,
the insertions of $i$ and $k$ commute.

To show (2), first we consider the case
when  there is no tile $(x,y)$ in
the insertion footprints of $F\leftarrow i$
such that $(x,y+1)$ is in the insertion
footprints of $(F\leftarrow i)\leftarrow k$. In this case, the insertion footprints
of $(F\leftarrow i)$ still are \jt-tiles
in the bumpless pipe dream
$(F\leftarrow i)\leftarrow k$. This means
that we can undo the insertion of $i$
in $(F\leftarrow i)\leftarrow k$ and
get $(F\leftarrow k)$. In other words,
in this case $(F\leftarrow i)\leftarrow k=(F\leftarrow k)\leftarrow i$.

Otherwise, let $(x,y)$ be the first
tile in the insertion footprints of
$F\leftarrow i$ such that 
$(x, y+1)$ is in the insertion footprints
of $(F\leftarrow i)\leftarrow k$.
This means that the insertion of
$i$ and $k$ \emph{before either path reaches
$(x,y)$} commute, and therefore
the footprints of 
$((F\leftarrow i)\leftarrow k)\leftarrow j$ are the same as $((F\leftarrow k)\leftarrow i)\leftarrow j$ in this
initial segment and is NE of the path
for $i$ and SW of the path for $j$. 

Now consider the situation where we
insert $i$ until the marked tile reaches
$(x_i, y+1)$ for some $x_i>x$, and then
insert $k$ until the marked tile reaches
the furthest tile $(x_k, y_k)$ where $x<x_k<x_i$
and $y_k>y+1$.
(Note that $(x_k, y_k)$ might be the
rightmost tile of some contiguous block
of blank tiles, in which case it will
not become a footprint.
It's easy to check
given our conditions these tiles must
exist.) Then, we insert $j$ until it
reaches the furthest tile $(x_j,y_j)$
with $y_j>y+1$ and $x_j>x_k$. One can
check that it must be the case that
$x_j \le x_i$ and $y_j\le y_k$.

We now perform three droops moves
that advance the marked tiles for
$i,j,k$ in that order. Notice that
all three moves belong to the same
active pipe, and the results are the
same as if the moves were done in the
order $k,i,j$, as illustrated in Figure~\ref{fig:ikjequalskij}.
\begin{figure}
    \centering
    \includegraphics[scale=0.7]{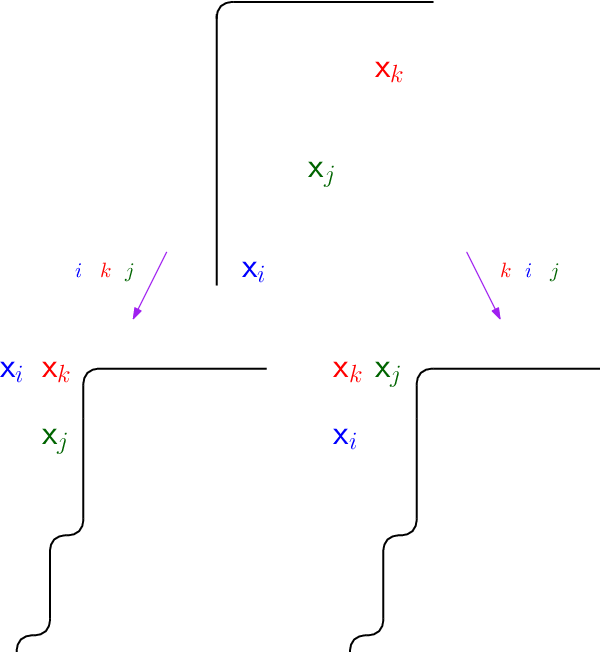}
    \caption{Advancing marks for
    $i$, $k$, $j$ vs. advancing
    marks for $k$, $i$, $j$. Blue
    represents $i$, red represents
    $k$, and green represents $j$.
    The \r-tile in the top 
    diagram has coordinate
    $(x,y)$.}
    \label{fig:ikjequalskij}
\end{figure}
From the diagram on the left, we continue
the insertion process by completing
the insertion of $i$, and then $k$,
and finally $j$. From the diagram on the
right, we do so in the order $k,i,j$. 
By similar reasoning as in part (1), 
we see that these result in the same
diagram when finished.
\end{proof}

\begin{lemma}
\label{lem:coxeter}
Let $F\in\BPDx(\sigma)$. When all the insertions are defined,
$((F\leftarrow i)\leftarrow i+1)\leftarrow i=((F\leftarrow i+1)\leftarrow i)\leftarrow i+1$.
\end{lemma}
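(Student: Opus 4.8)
The plan is to establish the Coxeter relation for insertions, namely
$((F\leftarrow i)\leftarrow i{+}1)\leftarrow i=((F\leftarrow i{+}1)\leftarrow i)\leftarrow i{+}1$,
by the same local-analysis strategy used for Lemma \ref{lem:knuth}, tracking the three marked tiles simultaneously rather than comparing two global rectifications. Since the two sides share the same starting bumpless pipe dream $F$ and the consecutive labels $i,i{+}1,i$ (respectively $i{+}1,i,i{+}1$) interact only through the pipes indexed $i$, $i{+}1$, and $i{+}2$, I expect the entire discrepancy between the two sides to be confined to a bounded band of rows and columns, with everything to the far NE/SW untouched by the analysis of Lemma \ref{lem:ij}.

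First I would set up the common initial segment. As in the proof of Lemma \ref{lem:knuth}(2), I would insert the first two labels and identify the earliest position where the two insertion paths begin to diverge; by Lemma \ref{lem:ij} the footprints for an $i$-insertion following an $(i{+}1)$-insertion sit strictly SW of the footprints of the $(i{+}1)$-insertion, and symmetrically in the other order. I would then isolate the small configuration of pipes $i,i{+}1,i{+}2$ in the relevant band and verify, by a direct local move-by-move check, that performing the three droop/undroop advances in the order dictated by $i,i{+}1,i$ produces exactly the same tiling as performing them in the order $i{+}1,i,i{+}1$. This is the bumpless-pipe-dream incarnation of the Coxeter relation $i(i{+}1)i\sim(i{+}1)i(i{+}1)$ among reduced words, so a finite case analysis of the braid-like rearrangement of three adjacent pipes should close it, mirroring Figure \ref{fig:ikjequalskij}.

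The main obstacle I anticipate is the bookkeeping at the moment the three active marks collide: unlike the $i<j<k$ case of Lemma \ref{lem:knuth}, here two of the three labels are equal, so the relevant pipes are $i,i{+}1,i{+}2$ and the braid move genuinely reshapes a single $2\times 2$ (or slightly larger) block rather than merely commuting disjoint slides. I would need to be careful that the column moves in Step (2)–(3) of the rectification really do interact only through these three pipes, and that the sliding move of Step (1) across contiguous blank blocks does not introduce an additional interaction—this is precisely the feature that was absent in \cite{lam2018back} and must be re-checked here. Once the collision block is handled, the commutation of the remaining, disjoint portions of the insertions follows by the same reasoning as in part (1) of Lemma \ref{lem:knuth}, giving identical final diagrams and hence the claimed equality.
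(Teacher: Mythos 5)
Your proposal is correct and follows essentially the same route as the paper: the paper's proof of Lemma~\ref{lem:coxeter} simply invokes the reasoning of Lemma~5.25 of \cite{lam2018back}, which is precisely the local braid-relation analysis of the three marks on pipes $i$, $i{+}1$, $i{+}2$ that you outline, combined with commuting the disjoint remainders as in Lemma~\ref{lem:knuth}(1). You also correctly flag the one genuinely new ingredient relative to \cite{lam2018back} — checking that the Step~(1) sliding move across contiguous blank blocks causes no extra interaction — which is exactly the point the paper's remark identifies as the only technical difference from the cited setting.
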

\begin{proof}
The same reasoning in
the proof of \cite[Lemma 5.26]{LLS} applies here.
\end{proof}

\begin{proposition}
\label{prop:undo}
Let $P$ be a bumpless pipe dream with a region of
marked blank tiles of shape $\lambda$
at its NW corner,
and let 
$Q$ be a standard tableau of shape $\lambda$. 
Then $C:=\{\word(P,S): S\in \SYT(\lambda)\}$ forms
a single Coxeter-Knuth equivalence class.
Furthermore, the result of rectification
$\rect(P,Q)$ is independent of $Q$. 
\end{proposition}
\begin{proof}
By running the proof of Lemmas 
\ref{lem:knuth} and \ref{lem:coxeter}
backwards, we may show that if $Q,Q'\in\SYT(\lambda)$
are dual equivalent, then $\rect(P,Q)$ and $\rect(P,Q')$
are the same, and $\Psi(P,Q)$ and $\Psi(P,Q')$
differ by a single Coxeter-Knuth move. We omit
the details. Therefore by the fact that
dual equivalences connect all $\SYT(\lambda)$,
the words in $C$ are all
Coxeter-Knuth equivalent, and $\rect(P,Q)$
is independent of $Q$. $C$ is a single 
Coxeter-Knuth class because it is in bijection
with $\SYT(\lambda)$.
\end{proof}
Because rectification is independent of
order, we write $\rect(P):=\rect(P,Q)$
for any $Q\in\SYT(\lambda(P))$.

The following Corollary is immediate from
Proposition~\ref{prop:undo}.
\begin{corollary}
\label{cor:defined}
Let $F$ be a bumpless pipe dream and 
$\mathbf{i}$ a reduced word. If $F\leftarrow\mathbf{i}$ is defined,
then for any $\mathbf{i}'$ Coxeter-Knuth
equivalent to $\mathbf{i}$, $F\leftarrow{\mathbf{i}'}$
is also defined.
\end{corollary}

The following proposition generalizes 
 \cite[Theorem 5.19]{LLS}.
 
\begin{proposition}
\label{prop:lls}
Let $\sigma,w\in S_{[a,b]}$ such that
$\ell(w\sigma)=\ell(w)+\ell(\sigma)$. 
Suppose $\mathbf{i}$ is a reduced word
of $w$ and $F\in\BPD(\sigma)$. 
Denote the shape of the reduced word
tableau of $\mathbf{i}$ by $\lambda(\mathbf{i})$.
The following statements hold.
\begin{itemize}
    \item [(1)] For a fixed $P\in\BPDx(w\sigma)$
    with $\ell(w)$ marked tiles,
    the set
    $C_P=\{\mathbf{j}\in \red(w): F\leftarrow \mathbf{j}=P\}$,
    when non-empty,
    is a single Coxeter-Knuth equivalence class.
    \item [(2)] If $F\leftarrow \mathbf{i}$ is defined, $\lambda(F\leftarrow\mathbf{i})=\lambda(\mathbf{i})$.  \qedhere
\end{itemize} 
\end{proposition}

\begin{proof}
For (1), that $C_P$ is a union of Coxeter-Knuth
equivalence classes
follows from Lemmas \ref{lem:knuth},
\ref{lem:coxeter}, and Corollary~\ref{cor:defined}.
It forms a single Coxeter-Knuth class
follows from Proposition~\ref{prop:undo}. 

For (2), suppose $F':=F\leftarrow\mathbf{i}$ is defined.
Let $C(\mathbf{i})$ be the Coxeter-Knuth equivalence
class of $\mathbf{i}$. 
By Proposition~\ref{prop:undo}, 
$C(\mathbf{i})=\{\word(F',S):S\in\SYT(\lambda(F'))$.

By Corollary~\ref{cor:popdes}, 
\[\{\Des(S):S\in\SYT(\lambda(F'))\}=\{\Des(\mathbf{i}^{-1}):\mathbf{i}\in C(\mathbf{i})\}.\]
If $C$ is Coxeter-Knuth equivalence class and 
$\lambda_C$ the shape of the reduced words
tableaux of words in $C$,
The equality of multisets
$\{\Des(\mathbf{j}^{-1}):\mathbf{j}\in C\}=\{\Des(S):S\in\operatorname{SYT}(\lambda_C)\}$
is a property of Edelman-Greene correspondence.
Also recall from the theory of fundamental quasi-symmetric
functions that
the multiset $\{\Des(S):S\in\operatorname{SYT}(\lambda)\}$
uniquely determines $\lambda$. 
It follows that $\lambda(F')=\lambda(\mathbf{i})$.
\end{proof}

\begin{lemma}
\label{lem:backstable}
Let $\sigma\in S_b$, $\mathbf{i}$ a reduced word
of $w\in S_{[a,b]}$ where $a\le 0 < b$. Assume
$\ell(w\sigma)=\ell(w)+\ell(\sigma)$. Let $P(\mathbf{i})$ be
the reduced word tableau for $\mathbf{i}$.
If the number of rows of $P(\mathbf{i})$ is no greater than
$1-a$, then for any $F\in\BPD(\id_{[a,0]}\oplus\sigma)$ where 
$\id_{[a,0]}\oplus\sigma\in S_{[a,b]}$, 
$F\leftarrow \mathbf{i}$ is defined. 
\end{lemma}
\begin{proof}
Recall that we read a word from right to left when
performing insertion, which is the same as inserting
$\mathbf{i}^{-1}$ from left to right.
By Corollary~\ref{cor:defined}, we may assume without
loss of generality that the recording
tableau $Q(\mathbf{i})$ reads $1,\cdots,\ell(\mathbf{i})$
when read row-by-row from top to bottom, and within
each row left to right. Suppose $Q(\mathbf{i})$
has $m$ entries in the last row.
By Lemma~\ref{lem:insok}, since there are no 
unmarked blank tiles in the non-positive rows,
the first $\ell(\mathbf{i})-m$
iterations of insertion of $\mathbf{i}$ must
be successful. Since $\Des(\mathbf{i}^{-1})=\Des(Q(i))$
the last $m$ letters of $\mathbf{i}^{-1}$ must be
increasing. By the proof of Lemma~\ref{lem:ij} (1) as
well as the fact that there are no unmarked
blank tiles in non-positive rows, we may see that
the insertion of the remaining letters must also
be successful, by similar reasoning as in Lemma~\ref{lem:insok}.
\end{proof}

\section{Schubert products for permutations
with separated descents}

In this section, we first introduce a construction 
that generalizes the construction of placing
a semi-standard Young tableau NE of another and
creating a skew tableau which can then be 
rectified, using bumpless pipe dreams with marked
blank tiles as introduced in the previous section.
The explicit connection will be explained in
Section~\ref{sec:jdtconnection}. We then prove
our main theorem in Section~\ref{subsec:main}
using the properties we developed for insertion
and rectification in Section~\ref{subsec:recinsert}.
\label{sec:sep}

\subsection{The star operation for permutations with separated descents}
\label{subsec:sep1}
We say that a permutation 
$\pi\in S_n$ has a \textbf{descent}
at position $i$ if $\pi(i)>\pi(i+1)$. We
denote the set of $\pi$'s descents 
$\Des(\pi):=\{i\in\{1,2,\cdots,n-1\}:\pi(i)>\pi(i+1)\}$.

Suppose $\pi,\rho\in S_n$ have \textbf{separated descents}
at position $k$; namely for all
$i\in\Des(\pi)$, $i\ge k$  and for all 
$i\in\Des(\rho)$, $i\le k$ . 
We define an operation $\star$ on permutations
$\pi$ and $\rho$, where $\pi\star\rho\in S_{[1-k,2n-k]}$, as follows:

\[\pi\star \rho(i) =\begin{cases}
\pi(i+k)-k & \text{ if }i\in [1-k,0]\\
\rho(i)+n-k & \text{ if }i\in[1,k]\\
\pi(i)-k &\text{ if }i\in[k+1, n]\\
\rho(i-(n-k)) + n-k &
\text{ if }i\in[n+1,2n-k].
\end{cases}\]
We emphasize that the symmetric group $S_{[1-k,2n-k]}$ is on $2n$ numbers that involve non-positive numbers.

Now let $D\in\BPD(\pi)$ and $E\in \BPD(\rho)$. We define a procedure to produce a 
bumpless pipe dream with marked blank tiles $D\star E$
in the $2n\times 2n$ grid with indices $1-k,\cdots, 2n-k$
by the following construction.

We separate $D$ into two blocks, $D_\text{top}$ and $D_\text{bot}$,
where $D_\text{top}$ contains the first $k$ rows of $D$ and $D_\text{bot}$ contains the last $n-k$ rows of $D$.
Also separate $E$ into $E_\text{top}$ and $E_\text{bot}$ in a similar
fashion. Pictorially, $D=\frac{D_\text{top}}{D_\text{bot}}$ and $E=\frac{E_\text{top}}{E_\text{bot}}$. Since the first $k$ values of
$\pi$ are in increasing order, properties of bumpless pipe dreams
ensure that the NE corner of $D$ looks like
\begin{center}
\includegraphics[scale=0.55]{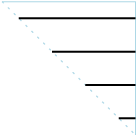}
\end{center}
with a total of $k$ rows. Call this $\nabla_k$. We now build
$D\star E\in\BPDx(\pi\star\rho)$ according to the 
the schema shown in Figure~\ref{fig:schema}.
\begin{figure}[h]
  \centering
     \includegraphics[scale=0.55]{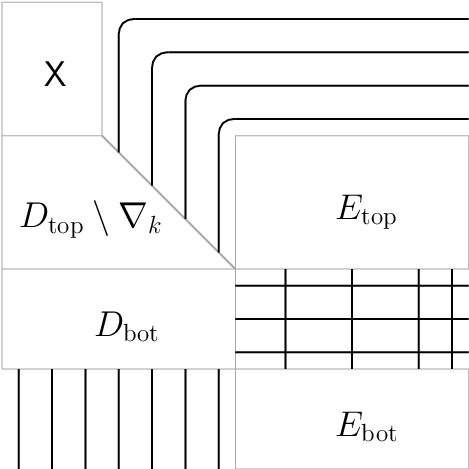} 
    \caption{Schema for $D\star E$}
    \label{fig:schema}
\end{figure}

Notice that there are $k$ vertical pipes in the
region between $E_\text{top}$ and $E_\text{bot}$,
connecting the two parts and making the diagram
a valid bumpless pipe dream. The region marked
with $\mathsf{X}$ in the NW corner are blank tiles
of the $k\times(n-k)$ rectangular shape, each
marked with an ``$\mathsf{x}$''.

    \begin{figure}[h]
    \centering
     \raisebox{0.35\height}{\includegraphics[scale=0.7]{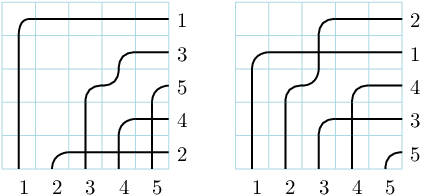}}\hskip 1cm \includegraphics[scale=0.7]{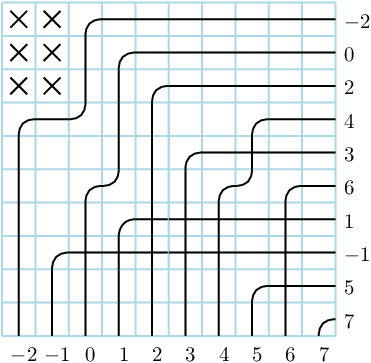}
     \caption{$\pi=13542$, $\rho=21435$, $n=5$, $k=3$, $D\in\BPD(\pi)$, $E\in\BPD(\rho)$, $D\star E\in \BPD^\boxtimes(\pi\star\rho)$}
     \label{fig:DstarE}
    \end{figure}

Let $\BPD^\boxtimes(\pi\star \rho)\subset \BPDx(\pi\star\rho)$
denote the set of bumpless pipe dreams
with marked blank tiles
of $\pi\star\rho$ such that
the first $k$ rows are of the form
that the initial $k$ columns consist of 
marked blank tiles,  followed by
$k$ non-intersecting
hook-shaped pipes that completely
occupy
the
$k\times(n+k)$ remaining columns. This looks
like the first $k$ rows in Figure~\ref{fig:schema}.

By construction, it is easy to see that
$D\star E\in \BPD^\boxtimes(\pi\star\rho)$.
For a concrete example, see Figure \ref{fig:DstarE}.

\begin{lemma}
\label{lem:bijection}
There is a direct bijection between
$\BPD^\boxtimes(\pi\star \rho)$
and
 $\BPD(\pi)\times \BPD(\rho)$. In particular,
if $F\in \BPD^\boxtimes(\pi\star \rho)$ 
corresponds to $(D,E)\in \BPD(\pi)\times \BPD(\rho)$
under the bijection, then for each $i$
the number of unmarked blank tiles in row $i$ of
$F$ is equal to the total number of blank
tiles in row $i$ of $D$ and $E$.
\end{lemma}
\begin{proof}
Suppose $F\in\BPD^\boxtimes(\pi\star \rho)$.
By the definition of $\pi\star\rho$,
the last descent of $\pi\star\rho$ is less than $n$,
and therefore
the rows $[n+1,2n-k]$ of $F$ do not contain
any blank tiles and are completely
determined by the rows above.
First consider the $(n-k)\times n$
region of $F$ with rows $[k+1,n]$
and columns $[1-k,n-k]$.
The pipes $\pi\star\rho(i)=\pi(i)-k\le n-k$
for
$i\in[k+1,n]$. This means that 
these $n-k$ pipes travel from
the south edge of the region
and exit from
the east edge of this region.
The pipes $\pi(i+k)-k\le n-k$ for $i\in[1-k,0]$ travel from the south
edge and exit from
the north edge of this region.
Their configuration agrees with the
last $(n-k)$ rows of
 a bumpless pipe dream of
$\pi$.

Now consider the $k\times n$
region of $F$ with rows
$[1,k]$ and columns $[1-k,n-k]$.
By the definition of 
$\pi\star\rho$, 
since $\rho(i)+n-k>n$,
the only pipes
in this region have indices
$\pi(i+k)-k\le n-k$ for $i\in[1-k,0]$,
and all pipes with indices $\pi(i+k)-k$
for $i\in[1-k,0]$ enter this region
from the south edge and exit from 
columns $[n-k+1,n]$ of the
north edge without ever crossing
each other. If we reroute these pipes
by modifying the tiles in the 
triangular region (weakly) above the
$k$th diagonal counting from the NE
corner of this region,
so that the pipes
exit from the east edge instead,
we can make this region agree with
the first $k$ rows of a bumpless
pipe dream of $\pi$, which can be
combined with the bottom region
to make up a bumpless pipe dream of 
$\pi$.

Consider the $k\times n$ region of
$F$ with rows $[1,k]$ and columns
$[n-k+1,2n-k]$. By the definition
of $\pi\star\rho$ and the discussion above,
the only pipes that show up in this
region are $\rho(i)+n-k>n-k$, and these
pipes enter from the south edge
of this region and exit from the east.
Therefore, this region agrees with
the first $k$ rows of a bumpless pipe
dream of $\rho$.
Furthermore, there are no blank tiles
in the $(n-k)\times n$ region with rows
$[k+1,n]$ and columns $[n-k+1,2n-k]$. Since
$\rho$ has no descents after $k$, 
its bumpless pipe dream is completely
determined by the first $k$ rows. This
gives us a bumpless pipe dream of $\rho$.

It is easy to see that the inverse of
this process is the procedure of constructing $D\star E$ as described above.
We have established the desired bijection.
\end{proof}
\begin{remark}
Lemma~\ref{lem:bijection} is the reason why
we need the ``separated descent'' assumption. If
this condition is not satisfied, it seems very
difficult to construct a set like $\BPD^\boxtimes(\pi\star\rho)$ in weight-preserving bijection with
$\BPD(\pi)\times \BPD(\rho)$.
\end{remark}

\subsection{Main theorem}
\label{subsec:main}
\begin{theorem}
\label{thm:main}
Suppose $\pi, \rho\in S_n$ such that $\pi$ has
no descents before position $k$ and $\rho$ has
no descents after position $k$.
Let $\sigma\in S_{2n-k}$ such that 
$\ell(\pi\star\rho)-\ell(\sigma)=
\ell((\pi\star\rho)\sigma^{-1})=k(n-k)$.
Let $\lambda_{k\times(n-k)}$ be the partition of
the $k\times(n-k)$ rectangular shape.
 The Schubert structure constant
$c_{\pi,\rho}^\sigma$ is equal to 
the Edelman-Greene coefficient
$j^{(\pi\star\rho)\sigma^{-1}}_{\lambda_{k\times(n-k)}}$, which is
the number of
reduced word tableaux $T$ of shape $\lambda_{k\times(n-k)}$ such
that the permutation given by
the reading word of $T$  is 
$(\pi\star\rho)\sigma^{-1}$. Furthermore,
$c_{\pi,\rho}^\sigma=0$ for all other $\sigma$ (even though the
number of tableaux may be nonzero). 
\end{theorem}
\begin{proof}
 We will prove the theorem by constructing
a bijection between $\BPD(\pi)\times\BPD(\rho)$
with \[\bigsqcup_{\sigma\in \Sigma} \BPD(\sigma)\times \{C(\mathbf{i}): \mathbf{i}\in\red((\pi\star\rho)\sigma^{-1}), \operatorname{shape}(P(\mathbf{i}))=\lambda_{k\times(n-k)}\},\] where $\Sigma=\{\sigma\in S_{2n-k}:\ell(\pi\star\rho)-\ell(\sigma)=
\ell((\pi\star\rho)\sigma^{-1})=k(n-k)\}$
(in other words, $\sigma$ must be below $\pi\star\rho$
in left weak Bruhat order and their lengths differ by $k\times (n-k)$), 
and $P(\mathbf{i})$ the reduced word tableau of $\mathbf{i}$.

 Since $\pi$ and $\rho$ have separated descents,
the construction of $D\star E$ is for any
$D\in\BPD(\pi)$ and $E\in\BPD(\rho)$ is
possible. Suppose
$\rect(D\star E)=F$.
Since all marked blank tiles in $D\star E$
are in rows $[1-k,0]$, there are no
unmarked blank tiles in rows $[1-k,0]$, and rectification
of $D\star E$ removes all the marked blank tiles
while preserving the number of unmarked blank tiles
in each row, the pipes numbered $1-k$ to $0$ of
$F$ must all be for the identity permutation
on $[1-k,0]$. Therefore $F\in\BPD(\id_{[1-k,0]}\oplus\sigma)$ 
for some $\sigma\in S_{2n-k}$. That $\sigma\in\Sigma$
follows from the definition of rectification.
Then by Proposition~\ref{prop:undo}, \[C:=\{\Psi(D\star E, U):U\in\SYT(\lambda_{k\times(n-k)})\}\] is a single Coxeter-Knuth
class in reduced words of $(\pi\star\rho)\sigma^{-1}$,
and the corresponding reduced word tableaux
is of shape $\lambda_{k\times(n-k)}$.
So  $D\star E$ maps to $(F,C)$ under the bijection,
and $D\star E=F\leftarrow\mathbf{i}$ for any $\mathbf{i}\in C$.

For the other direction,
let $\sigma \in \Sigma$, and suppose
$\mathbf{i}$ is a reduced word of
$(\pi\star\rho)\sigma^{-1}$ such
that the shape of $P(\mathbf{i})$ is $\lambda_{k\times(n-k)}$.
Consider $\id_{[1-k,0]}\oplus\sigma\in S_{[1-k,2n-k]}$,
so the NW corner of any bumpless pipe dream of 
$\id_{[1-k,0]}\oplus\sigma$ is at $(1-k,1-k)$. 
Let $F\in\BPD(\id_{1-k,0}\oplus\sigma)$. 
By Lemma~\ref{lem:backstable} as well
as the length condition defining $\Sigma$,
 $F\leftarrow\mathbf{i}$
is defined, and therefore
the permutation of $(F\leftarrow\mathbf{i})$
is $\pi\star\rho$, by definition of $\mathbf{i}$ and
insertion.
By Proposition~\ref{prop:lls},
since the shape of the reduced word tableau
of $\mathbf{i}$ is $\lambda_{k\times(n-k)}$, the first
$k$ rows of $(F\leftarrow\mathbf{i})$ must
be of the form that the first $n-k$ columns 
consist of marked blank tiles, and the
rest of the columns contain no blank tiles.
This means
$(F\leftarrow\mathbf{i})\in\BPD^\boxtimes(\pi\star\rho)$.
If $\mathbf{j}$ is another reduced word of 
$(\pi\star\rho)\sigma^{-1}$ but in a different
Coxeter-Knuth class than $\mathbf{i}$,then
$(F\leftarrow\mathbf{i})\neq (F\leftarrow\mathbf{j})$
by Proposition \ref{prop:lls}. Therefore,
each pair $(F,C(\mathbf{i}))$ corresponds to
a unique element in
$\BPD^\boxtimes(\pi\star\rho)$. The insertion algorithm 
preserves the number of unmarked blank tiles on
each row, so $F$ and $F\leftarrow \mathbf{i}$
have the same monomial weight.

Then since $\BPD^\boxtimes(\pi\star\rho)$ bijects
with $\BPD(\pi)\times\BPD(\rho)$ in a weight-preserving
way by Lemma~\ref{lem:bijection}, the desired bijection
is established. Furthermore, since
the Edelman-Greene coefficient $j^{(\pi\star\rho)\sigma^{-1}}_{\lambda_{k\times(n-k)}}$ counts the
number of elements in the set $\{C(\mathbf{i}): \mathbf{i}\in\red((\pi\star\rho)\sigma^{-1}), \operatorname{shape}(P(\mathbf{i}))=\lambda_{k\times(n-k)}\}$,
the statement $c_{\pi,\rho}^\sigma=j^{(\pi\star\rho)\sigma^{-1}}_{\lambda_{k\times(n-k)}}$ follows. The bijection 
accounts for all positive structure constants,
so $c_{\pi,\rho}^\sigma=0$ for all $\sigma\not\in\Sigma$. 
\end{proof}

\begin{example}
Let $Q=((1,3),(2,5),(4,6))\in \SYT(\lambda_{3\times 2})$. Then $\Psi(D\star E, Q)=s_1s_2s_{-1}s_5s_0s_3$,
where $D\star E$ is as in 
Figure~\ref{fig:DstarE}.
The result
$\rect(D\star E)\in \BPD(\underline{-2}\,\underline{-1}03254167)$ is shown in
Figure~\ref{fig:rectres}.  $\S_{32541}$ appears with
multiplicity 1 in the Schubert product expansion of $\S_{13542}\S_{2143}$.
\begin{figure}[h]
    \centering
    \includegraphics[scale=0.55]{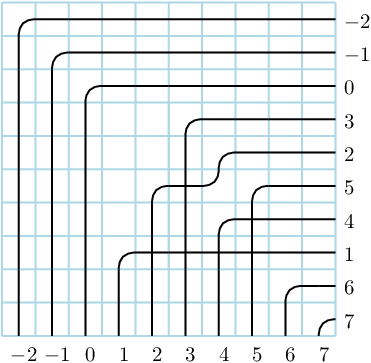}
    \caption{$\rect(D\star E)$}
    \label{fig:rectres}
\end{figure}
\end{example}
\begin{remark}Notice that we specifically imposed a length
condition relating $\pi,\rho,\sigma$ when
stating our rule. To show that this is 
necessary, consider the example when
$\pi=13542$, $\rho=21435$,
$\pi\star\rho=(-2,0,2,4,3,6,1,-1,5,7)$,
and $\sigma=5243167$. 
Then $(\pi\star\rho)\sigma^{-1}=(-2,0,2,-1,3,1,6,4,5,7)$, and $\mathbf{i}=s_1s_2s_{-1}s_5s_0s_4$ is a reduced word of $(\pi\star\rho)\sigma^{-1}$
whose reduced word tableau is of shape $3\times 2$.
Here $\ell(\sigma)=8$, $\ell(\pi\star\rho)=12$, and the required length condition is not satisfied.
Clearly, $\S_\sigma$ does not appear in the
expansion of $\S_\pi\S_\rho$ for degree reasons. Notice that the insertion of $\mathbf{i}$ into any $F\in\BPD(\sigma)$ is
not defined.
\end{remark}

As a direct consequence of
the proof of Theorem~\ref{thm:main}, we may state a different rule for the 
separated-descent structure constants that is
more closely analogous to one of the classical
rules for Littlewood-Richardson coefficients
using semi-standard Young tableaux, see e.g.\!
Corollary 2(v) of
\cite[Section 5.1]{fulton1997young}, without
making a connection to the Edelman-Greene
story. In this statement, we do not need to 
specify a length condition as in Theorem~\ref{thm:main}, as the bumpless pipe
dreams that arise as results of rectification are automatically for the correct permutations.
\begin{theorem}
\label{thm:jdtversion}
Let $\pi,\rho\in S_n$ be permutations with
separated descents at position $k$ and
$\sigma$ a permutation. 
Let $F\in\BPD(\id_{[1-k,0]\oplus\sigma})$.
Then $c_{\pi,\rho}^\sigma$ is the number of
elements $P$ in $\BPD^\boxtimes(\pi\star\rho)$ such
that $\rect(P)=F$.
\end{theorem}

\subsection{Multiple permutations with separated descents}
Equipped with the techniques developed
in the previous sections, we can easily
generalize the result to Schubert 
products for multiple permutations
with separated descents. To be precise,
suppose $\pi_0, \pi_1,\cdots \pi_m\in S_n$ and $1\le k_1 < k_2<\cdots <k_m\le n-1$,
where $\Des(\pi_0)\subseteq [1,k_1]$,
$\Des(\pi_1)\subseteq [k_1, k_2]$, $\cdots$ $\Des(\pi_i)\subseteq [k_i, k_{i+1}]$ $\cdots$,
$\Des(\pi_m)\subseteq [k_m,n-1]$.
The star operation can be generalized 
to the $(m+1)$-ary version, as follows:
\[\star(\pi_m,\cdots,\pi_1,\pi_0)(i) =\begin{cases}
\pi_m(i+\sum_{\alpha=1}^m k_\alpha)-(\sum_{\alpha=1}^m k_\alpha) & \text{ if }i\in [1-\sum_{\alpha=1}^m k_\alpha,-\sum_{\alpha=1}^{m-1} k_\alpha]\\
\pi_{m-1}(i+\sum_{\alpha=1}^{m-1} k_\alpha)-(\sum_{\alpha=1}^{m} k_\alpha)+n & \text{ if }i\in [1-\sum_{\alpha=1}^{m-1} k_\alpha,-\sum_{\alpha=1}^{m-2} k_\alpha]\\
\vdots & \\
\pi_1(i+k_1)-(\sum_{\alpha=1}^m k_\alpha)+(m-1)n & \text{ if }i\in [1-k_1,0] \\
\pi_0(i)+mn-\sum_{\alpha=1}^m k_\alpha & \text{ if }i\in [1,k_1] \\
\pi_1(i)+(m-1)n-\sum_{\alpha=1}^m k_\alpha & \text{ if }i\in [k_1+1,k_2] \\
\vdots & \\
\pi_m(i)-\sum_{\alpha=1}^m k_\alpha &
\text{ if }i\in [k_m+1, n]\\
\phi(i) &
\text{ if }i\in[n+1,(m+1)n-\sum_{\alpha=1}^m k_\alpha].
\end{cases}\]
where $\phi(i)$ is the $(i-n)$th smallest number in
$[1-\sum_{\alpha=1}^m k_\alpha,(m+1)n-\sum_{\alpha=1}^m k_\alpha]\setminus \pi\star\rho([1-\sum_{\alpha=1}^m k_\alpha, n])$. Here $\star(\pi_m,\cdots,\pi_0)\in S_{[1-\sum_{\alpha=1}^m k_\alpha, (m+1)n-\sum_{\alpha=1}^m k_\alpha]}$.

Define also 
$\BPD^\boxtimes(\star(\pi_m,\cdots \pi_0))$ as the set of
$D\in\BPD(\star(\pi_m,\cdots \pi_0))$
such that rows
$[1-\sum_{\alpha=1}^m k_\alpha,0]$ 
are of the form as showin in Figure~\ref{fig:firstrowsmulti}. We
denote the shape of the partition
at the NW corner as
$\lambda(k_1,\cdots,k_m,n)$.
\begin{figure}
    \centering
    \includegraphics[scale=0.8]{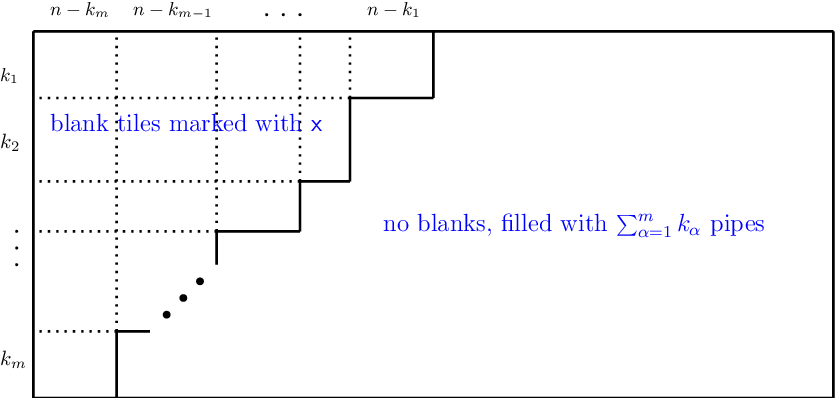}
    \caption{First $\sum_{\alpha=1}^m k_\alpha$ rows of $D\in \BPD^\boxtimes(\star(\pi_m,\cdots \pi_0))$}
    \label{fig:firstrowsmulti}
\end{figure}

We can easily generalize
the procedure described in Section \ref{subsec:sep1} and define the
$\star$-operation on $D_m\in \BPD(\pi_m),\cdots, D_0\in \BPD(\pi_0)$. The resulting bumpless
pipe dream is an element of
$\BPD^\boxtimes(\star(\pi_m,\cdots \pi_0))$. Furthermore, 
$\BPD^\boxtimes(\star(\pi_m,\cdots \pi_0))$ is in direct bijection
with $\BPD(\pi_m)\times \cdots\times \BPD(\pi_0)$.
See Figure~\ref{fig:multi}

\begin{figure}
    \centering
    \includegraphics[scale=0.6]{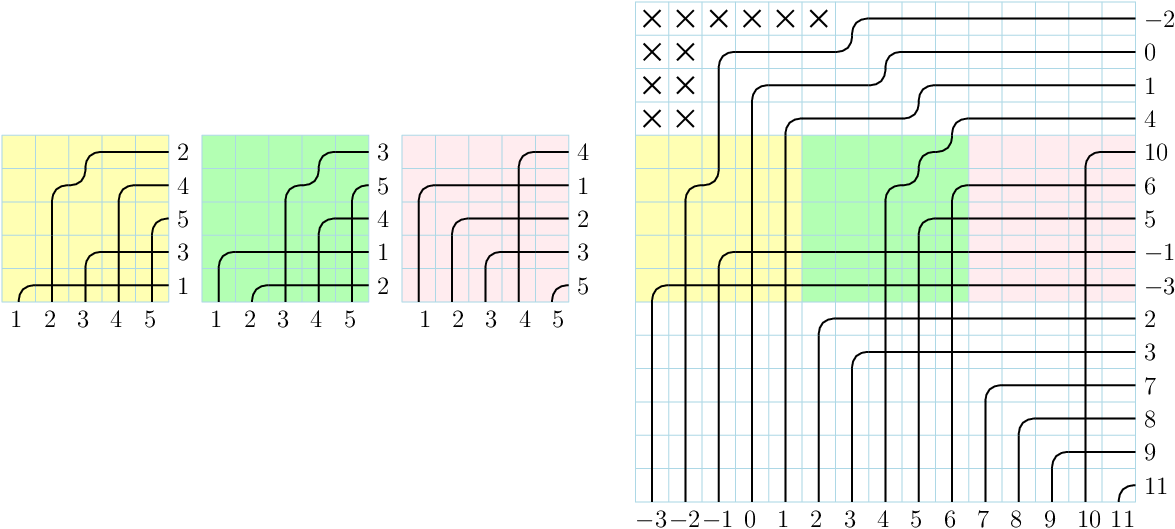}
    \caption{$n=5$, $m=2$, $k_1=1$, $k_2=3$, $\pi_0=41235$,
    $\pi_1=35412$, $\pi_2=24531$, $\star(\pi_2,\pi_1,\pi_0)\in[-3,11]$}
    \label{fig:multi}
\end{figure}
Our main theorem generalizes to 
Schubert products
for multiple permutations with
separated descents.

\begin{theorem}
Suppose $\pi_0, \pi_1,\cdots \pi_m\in S_n$ and $1\le k_1 < k_2<\cdots <k_m\le n-1$,
where $\Des(\pi_0)\subseteq [1,k_1]$,
$\Des(\pi_1)\subseteq [k_1, k_2]$, $\cdots$,$\Des(\pi_i)\subseteq [k_i, k_{i+1}]$,$\cdots$,
$\Des(\pi_m)\subseteq [k_m,n-1]$.
Let $\sigma\in S_{(m+1)n-\sum_{\alpha=1}^m k_\alpha}$. 
such that $\ell(\star(\pi_m,\cdots, \pi_0))-\ell(\sigma)=\ell((\star(\pi_m,\cdots, \pi_0))\sigma^{-1})=|\lambda(k_1,\cdots,k_m,n)|$. The Schubert structure
constant $c_{\pi_m,\cdots,\pi_0}^\sigma$ is equal to the 
Edelman-Greene coefficient $j_{\lambda(k_1,\cdots, k_m,n)}^{(\star(\pi_m,\cdots, \pi_0))\sigma^{-1}}$,
which is the number of
reduced word tableaux $T$
of shape $\lambda(k_1,\cdots, k_m,n)$
such that the permutation given by 
the reading word of $T$ is $(\star(\pi_m,\cdots, \pi_0))\sigma^{-1}$. Furthermore, $c_{\pi_m,\cdots,\pi_0}^\sigma=0$ for
all other $\sigma$.
\end{theorem}
\begin{remark}
Given $\pi_0,\pi_1,\pi_2$ that 
satisfy the separated descent conditions
as stated in the theorem, it is not
difficult to check that
for any $D_0\in \BPD(\pi_0)$, $D_1\in\BPD(\pi_1)$, $D_2\in \BPD(\pi_2)$,
$\rect(\star(D_0,D_1,D_2))=\rect(D_0\star(\rect(D_1\star D_2)))=\rect(\rect(D_0\star D_1)\star D_2)$.
\end{remark}

\section{Connection to jeu de taquin}
\label{sec:jdtconnection}
The construction of the star operation and
the rectification process were inspired by
the celebrated jeu de taquin algorithm
on skew tableaux. We discuss the connection
in this section.

Let $\lambda=(\lambda_1\ge \lambda_2\ge \cdots \ge \lambda_k\ge 0)$ be a partition. Let $w(\lambda,k)$ be the $k$-Grassmannian
permutation associated to partition $\lambda$,
namely, $w(\lambda,k)(i)=\lambda_{k-i+1}+i$ for $i\le k$
and $w(\lambda, k)$ has no descents after $k$.
For any $D\in\BPD(w(\lambda,k))$, associate 
a semi-standard Young tableau $T_D$ to
$D$ as follows. Label each blank tile
in $D$ with its row index, and perform 
all possible droop moves in $D$.
Each droop moves a blank tile
towards the NW direction together with
its label. When finished, the semi-standard
Young tableau at the NW corner is $T_D$.
Denote the shape of $T_D$ as $\lambda$.
It is known from \cite{LLS}
that this map $D\mapsto T_D$ gives a
bijection between $\BPD(w(\lambda,k))$ and
$\SSYT_k(\lambda)$,
where $\SSYT_k(\lambda)$ denotes
the set of semi-standard Young tableaux
of shape $\lambda$ with entries no greater
than $k$. The relationship between bumpless pipe dreams and tableaux in some sense dates back to \cite{KMY}, before the bumpless pipe dreams formulation was explicitly stated.

Let $\nu=(\nu_1,\cdots,\nu_m)$ and 
$\mu=(\mu_1,\cdots,\mu_l)$ be partitions
such that $l<m$ and $\mu_i\le\nu_i$
for each $i$. Suppose $k\ge m$. 
Let $T\in \SSYT_k(\nu/ \mu)$. 
We complete $T$ into a semi-standard
tableau of shape $\nu$
by filling in for each row
$i=l,l-1,\cdots, 1$, the number $i-l$
in each box on row $i$ that belongs
to $\mu$. We also mark these
entries with a different color. 
Call the resulting marked semi-standard
tableau $T'$. We may now use the correspondence
between semi-standard Young tableaux of
straight shapes and
bumpless pipe dreams for Grassmannian permutations
to get a bumpless pipe dream $D_{T'}$ for $T'$. To
distinguish the non-positive entries from the
original entries, we mark each of the blank tiles
in the non-positive rows of $D_{T'}$
with an ``$\mathsf{x}$''.

\begin{figure}[h]
    \centering
    \includegraphics[scale=0.72]{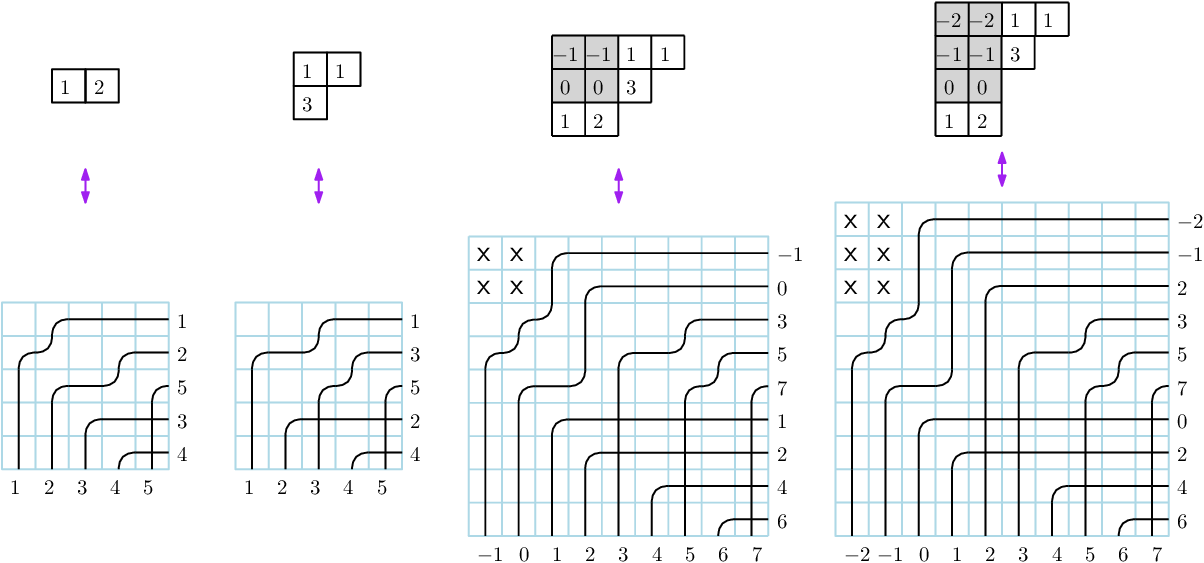}
    \caption{Connection of $T_D*T_E$ with $D\star E$}
    \label{fig:grassstar}
\end{figure}
We will now demonstrate by an example how our construction
connects to the classical construction
for computing products of Schur polynomials
with semi-standard Young tableaux, see Figure~\ref{fig:grassstar}. In this example,
$k=3$ and $n=5$. We let $T_D$ denote the 
first tableau, $D\in\BPD(12534)$ its corresponding
bumpless pipe dream, $T_E$ the second tableau, and
$E\in\BPD(13524)$. $T_D*T_E$ is the skew tableau
of shape $(4,3,2)/(2,2)$
formed by placing $T_E$ immediately NE of
$T_D$. We fill in the boxes in the NW partition
of shape $(2,2)$ with non-positive numbers as
shown, and find its corresponding bumpless
pipe dream $F$ with marked blank tiles. 
Finally, we construct $D\star E$ and find
its corresponding skew tableau. Notice
that for both $D\star E$ and $F$, drooping and
undrooping within positive rows will generate
a set in direct bijection with 
$\BPD(12534)\times \BPD(13524)$. Therefore, our
$\star$-construction is not the unique choice,
but a convenient one.


It remains to explain how jeu de taquin
relates to our column moves.
Let $T'$ be a marked semi-standard Young
tableau, as described prior to the example.
Define a
rectification algorithm on $T'$ by
iterating the following procedure
until there are no more marked entries.

    \begin{enumerate}
        \item Pick a SE most marked
        number in the NW marked region
        to be ``active'' in this iteration.
        \item 
        Increment the active marked number by 1. If this number is equal to $k$ and 
        at the SE border, delete it and end the iteration. Otherwise:
        \begin{enumerate}
            \item If the active marked
            number is equal to the number below, increment
            the number below as well
            and move the mark there.
            Repeat this until we get
            a valid tableau again.
            \item If the active marked
            number is equal to the 
            number to the right, move
            the mark to the rightmost
            number with this value on
            this row.
        \end{enumerate}
        Repeat this step.
    \end{enumerate}

\begin{proposition}
The rectification algorithm
    on $T'$ described above simulates
    jeu de taquin on $T$.
    Furthermore, the
    algorithm agrees with the
    rectification algorithm defined
    on marked bumpless pipe dreams
    in each step when each 
    intermediate marked tableau
    is mapped 
    to its corresponding marked 
    bumpless pipe dream.
\end{proposition}
\begin{proof}
Step 2(a) simulates consecutive
vertical slides in jeu de taquin,
and Step 2(b) simulates 
consecutive horizontal slides.
Furthermore, Step 2(a) corresponds
to an undroop column move
as defined in Section \ref{subsec:recinsert}, and 
Step 2(b) corresponds to  moving
a mark to the rightmost tile
in a consecutive block of blank 
tiles on a row in a bumpless pipe
dream. Finally, in a 
bumpless pipe dream of a $k$-Grassmannian permutation,
the column move that removes
a cross is only available
when the marked tile is in row
$k$. This determines the 
terminating condition in Step 2.
\end{proof}

\section{Final remarks}
It was shown in \cite{LLS}
that bumpless pipe dreams 
compute double Schubert polynomials,
and further explained in \cite{weigandt2021bumpless} that 
they also compute double 
Grothendieck polynomials.
We note that our construction
cannot be easily extended to
an equivariant setting, because 
the $\star$-operation and the insertion/rectification
algorithm are
row-biased and do not maintain
the necessary invariance for columns. 
A similar phenomenon is exhibited
in \cite{knutson2004formula}.
However, it should be possible to 
generalize our result to $K$-theory. 
A $K$-theoretic version of Edelman-Greene
insertion (perhaps Hecke insertion \cite{buch2008stable}) for bumpless pipe dreams,
would be necessary. We leave this for
future work.
An interpretation of the
$K$-theoretic version of
jeu de taquin \cite{thomas2009jeu} in the Schubert setting might also be relevant.
It would also be interesting to
see what other Schubert structure constants, if any,
are also Edelman-Greene coefficients.

\vskip 1em
\noindent\textbf{Funding.}
This material is based upon work supported by the National Science Foundation under Grant No. DMS-1439786 while the author was in residence at the Institute for Computational and Experimental Research in Mathematics in Providence, RI, during the Combinatorial
Algebraic Geometry program.
\vskip 1em

\noindent\textbf{Acknowledgments.}
I am grateful to my advisor Allen Knutson
for many in-depth discussions and inspiring questions,
as well as extensive comments on the paper. 
I would also like to thank Yibo Gao,
Thomas Lam, Mark Shimozono, and
Alex Yong for helpful discussions, as well as the anonymous
referees for careful reading and helpful suggestions. 
The improved exposition of the $\star$-operation on bumpless pipe dreams is due to one of them.

\bibliographystyle{alpha}
\bibliography{ref}

\end{document}